\setlist[enumerate]{nosep}
\DeclareMathAlphabet{\mathcal}{OMS}{cmsy}{m}{n}
\title{Computable Centering Methods for Spiraling Algorithms and their Duals, with Motivations from the theory of Lyapunov Functions}
\author{Scott B. Lindstrom\\Hong Kong Polytechnic University}
\date{\today}
\def\R{\hbox{$\mathbb R$}}
\def\N{\hbox{$\mathbb N$}}
\def\Newt{\hbox{$\mathcal N$}}
\def\Id{\hbox{\rm Id}}
\newcommand{\argmin}{\ensuremath{\operatorname{argmin}}}
\newcommand{\Fix}{\ensuremath{\operatorname{Fix}}}
\newcommand{\A}{\ensuremath{\mathbb A}}
\newcommand{\B}{\ensuremath{\mathbb B}}
\newcommand{\Cbb}{\ensuremath{\mathbb C}}
\newcommand{\prox}{\operatorname{prox}}
\newcommand{\dom}{\operatorname{dom}}
\newcommand{\aff}{\operatorname{aff}}
\newcommand{\gra}{\operatorname{gra}}
\newtheorem{theorem}{Theorem}
\newtheorem{corollary}{Corollary}
\newtheorem{lemma}{Lemma}
\newtheorem{proposition}{Proposition}
\theoremstyle{definition}
\newtheorem{definition}{Definition}
\newtheorem{example}{\it Example}
\def\namedlabel#1#2{\begingroup
	#2%
	\def\@currentlabel{#2}%
	\phantomsection\label{#1}\endgroup
}
\begin{document}
	
	\maketitle

\begin{abstract}
	For many problems, some of which are reviewed in the paper, popular algorithms like Douglas--Rachford (DR), ADMM, and FISTA produce approximating sequences that show signs of spiraling toward the solution. We present a meta-algorithm that exploits such dynamics to potentially enhance performance. The strategy of this meta-algorithm is to iteratively build and minimize surrogates for the Lyapunov function that captures those dynamics.
	
	As a first motivating application, we show that for prototypical feasibility problems the circumcentered-reflection method (CRM), subgradient projections, and Newton--Raphson are all describable as gradient-based methods for minimizing Lyapunov functions constructed for DR operators, with the former returning the minimizers of spherical surrogates for the Lyapunov function.
	
	As a second motivating application, we introduce a new method that shares these properties but with the added advantages that it: 1) does not rely on subproblems (e.g. reflections) and so may be applied for any operator whose iterates have the spiraling property; 2) provably has the aforementioned Lyapunov properties with few structural assumptions and so is generically suitable for primal/dual implementation; and 3) maps spaces of reduced dimension into themselves whenever the original operator does. This makes possible the first primal/dual implementation of a method that seeks the center of spiraling iterates. We describe this method, and provide a computed example (basis pursuit).
\end{abstract}

{\small
	\noindent
	{\bfseries 2010 Mathematics Subject Classification:}
	90C26, 65Q30, 47H99, 49M30
	
	\noindent
	{\bfseries Keywords:}
	ADMM, Douglas--Rachford, projection methods, reflection methods, iterative methods, discrete dynamical systems, Lyapunov functions, primal/dual, circumcenter, circumcentered-reflection method, Lyapunov surrogate method
}

\section{Introduction}\label{s:introduction}

Many important optimization problems are computationally tackled by repeated application of an operator $T$, whose fixed points allow one to recover a solution. Under very minimal assumptions, whenever the iterates exhibit local convergence to a fixed point, a particular function $V$ exists. This function $V$, called a Lyapunov function, describes the behaviour of the discrete dynamical system admitted by repeated application of the operator (see \cite[Theorem~2.7]{kellett2005robustness}).

The zeroes that minimize the Lyapunov function are the fixed points for the operator. Thus, if one \textit{did} have some knowledge---or a reasonable guess---about the structure of the Lyapunov function, one could consider the equivalent problem of seeking a minimizer for the Lyapunov function directly. We will introduce a class of methods---based on operators in $\mathbf{MSS}(V)$---that do exactly this, by \textit{minimizing spherical surrogates} for the Lyapunov function.

We suggest implementing such methods when an algorithm's change from iterate to iterate resembles the oscillations in Figures~\ref{fig:ellipse2} and \ref{fig:basis_pursuit}. This phenomenon has been observed for many problems; in addition to the examples in this paper, see also \cite{AB,artacho2019douglas,BCNPW,Benoist,BLSSS,BS,DT,DHL2019,dizon2020centering,giladi2019lyapunov,LLS,liang50improving,LSsurvey,poon2019trajectory}. In such a case, we will say that an algorithm \textit{shows signs of} spiraling. This definition, though informal and subjective, is useful in practice, because this pattern is very easy to check for. For many known examples, this pattern is co-present with a special property (\ref{A1}) that we will \textit{formally} call the \textit{spiraling property}. It is on this latter, formal, difficult-to-check definition that we will build our \textit{theory}, while the former, informal definition is easy to check in \textit{applications}.

Fascinatingly, the \textit{Circumcentered-Reflection Method} (CRM), for certain, highly structured feasibility problems, is an existing example \textit{from} the broader class of algorithms we introduce. However, CRM does \textit{not} succeed for a primal/dual implementation of the basis pursuit problem, for reasons we discover and describe in this paper. Motivated by our theoretical understanding of the new, broader class $\mathbf{MSS}(V)$, we introduce a novel operator (Definition~\ref{def:LT}) that performs very well in our experiments for the primal/dual framework.

\subsection{Background}

Algorithms like the \emph{fast iterative shrinkage-thresholding algorithm} (FISTA)  \cite{poon2019trajectory,liang50improving}, \emph{alternating direction method of multipliers} (ADMM) \cite{boyd2011distributed,eckstein2012augmented,eckstein2015understanding}, and \emph{Douglas--Rachford method} (DR) \cite{LSsurvey,DT,giladi2019lyapunov} seek to solve problems of the form
\begin{equation}\label{objective}
	\underset{x \in E}{\rm minimize}\quad f(x)+g(z) \quad {\rm such\; that }\; Mx=z.
\end{equation}
where $E,Y$ are Hilbert (here Euclidean) spaces, $M:E \rightarrow Y$ is a linear map, and $f,g:E \rightarrow \R \cup \{+\infty \}$ are proper, extended real valued functions. For a common example, when $M=\Id$ is the identity map and $f=\iota_A, g=\iota_B$ with
\begin{equation}\label{def:indicator}
	\iota_S :E \rightarrow \mathbb{R}\cup\{+\infty\} \quad \text{by} \quad \iota_S: x\mapsto \begin{cases}
		0 & \text{if} \; x \in S\\
		+\infty & \text{otherwise}
	\end{cases}
\end{equation}
for closed constraint sets $A,B$ with $A \cap B \neq \emptyset$, \eqref{objective} becomes the \emph{feasibility} problem: 
\begin{equation}\label{feasibility_problem}\tag{FEAS}
	\text{Find} \quad x \in A \cap B,
\end{equation}
For many problems of interest, ADMM, DR, and FISTA exhibit signs of spiraling \cite{LLS,poon2019trajectory,liang50improving}, such as those shown at right in Figure~\ref{fig:ellipse2}.

Borwein and Sims provided the first local convergence result for DR applied to solving the nonconvex feasibility problem \eqref{feasibility_problem} when $A$ is a sphere and $B$ a line \cite{BS}. Arag\'on Artacho and Borwein later provided a conditional global proof \cite{AB} for starting points outside of the axis of symmetry, while behaviour on the axis is described in \cite{BDL18}. Borwein et al. adapted Borwein and Sims' approach to show local convergence for lines and more general plane curves \cite{BLSSS}. In each of these settings, the local convergence pattern resembles the spiral shown at left in Figure~\ref{fig:ellipse2}; Poon and Liang have also documented signs of spiraling in the context of the more general problem \eqref{objective} \cite{poon2019trajectory}. Benoist showed global convergence for DR outside of the axis of symmetry for Borwein and Sims' circle and line problem \cite{Benoist} by constructing the Lyapunov function whose level curves are shown in Figure~\ref{fig:Lyapunovcircle}. Dao and Tam extended Benoist's approach to function graphs more generally \cite{DT}. Most recently, Giladi and R{\"u}ffer broadly used \emph{local} Lyapunov functions to construct a \emph{global} Lyapunov function when one set is a line and the other set is the union of two lines \cite{giladi2019lyapunov}. Altogether, Lyapunov functions have become the definitive approach to proving KL-stability and describing the basins of attraction for DR in the setting of nonconvex feasibility problems.

\begin{figure}
	\begin{center}
		\includegraphics[width=.3\textwidth,angle=90]{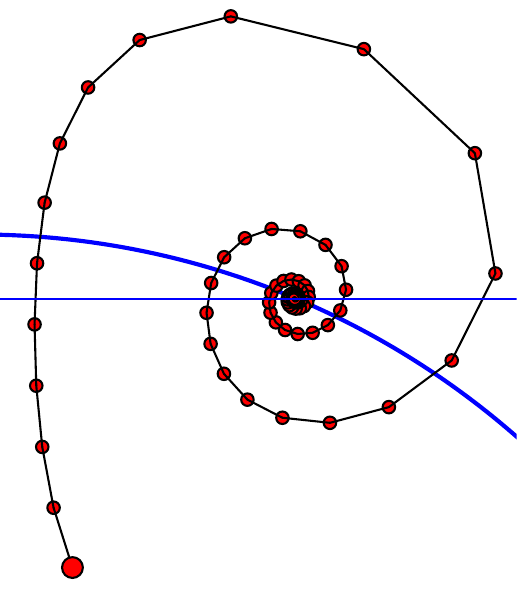}
		\begin{adjustbox}{trim={0.25\width} {0.38\height} {0.25\width} {0.38\height},clip=true}
			\includegraphics[width=1.1\textwidth]{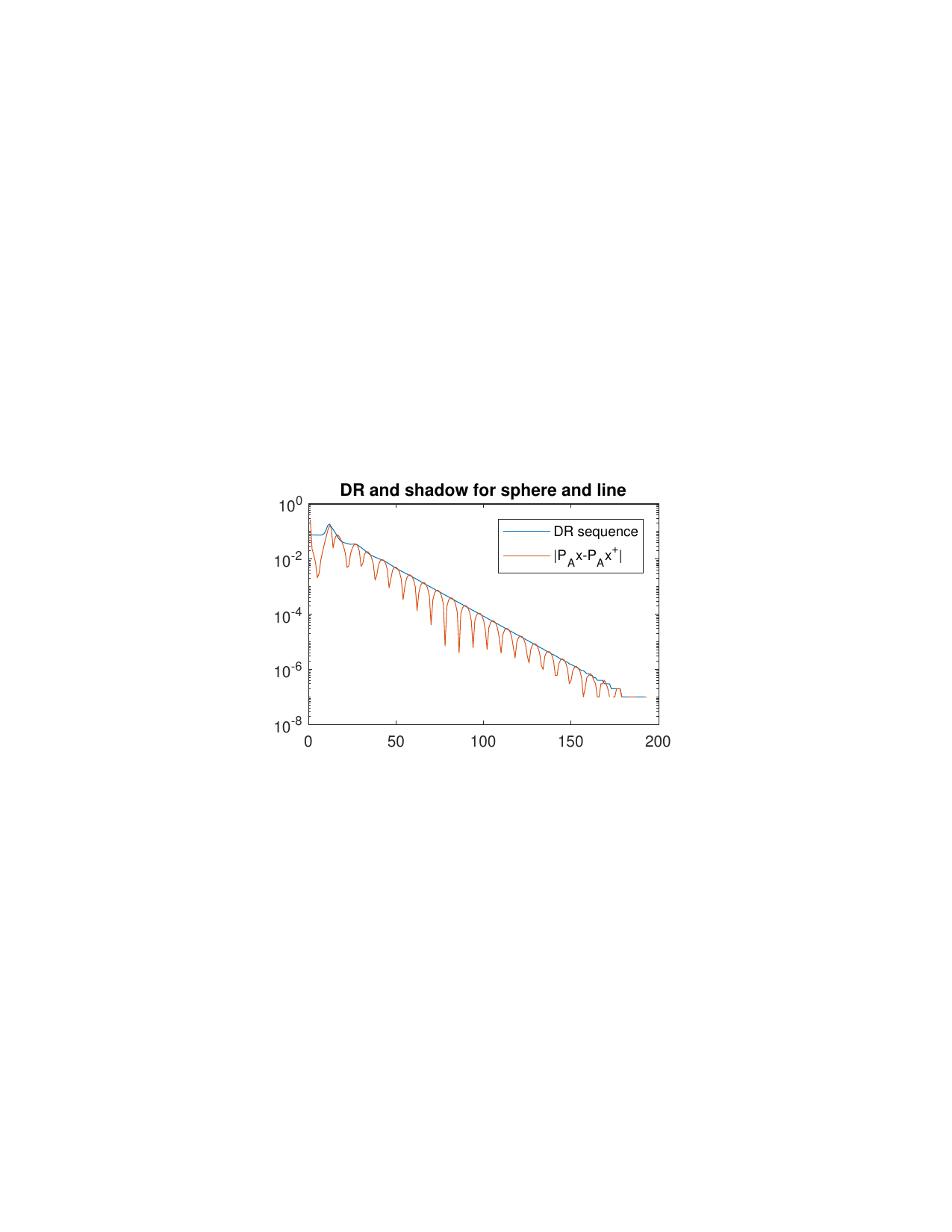}
		\end{adjustbox}
	\end{center}
	\caption{A plot of change from iterate to iterate for shadow sequence for an ellipse and line (right) when governing Douglas--Rachford sequence is spiraling (left).}\label{fig:ellipse2}
\end{figure}

\begin{figure}[t]
	\begin{center}
		\includegraphics[width=.45\textwidth]{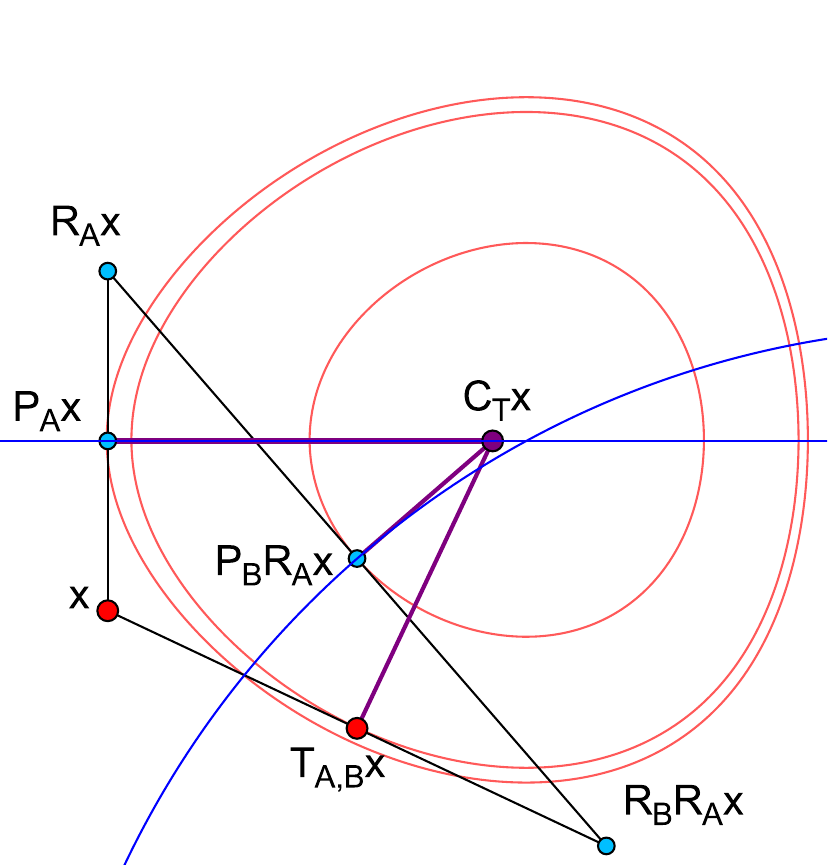}\quad\includegraphics[width=.45\textwidth]{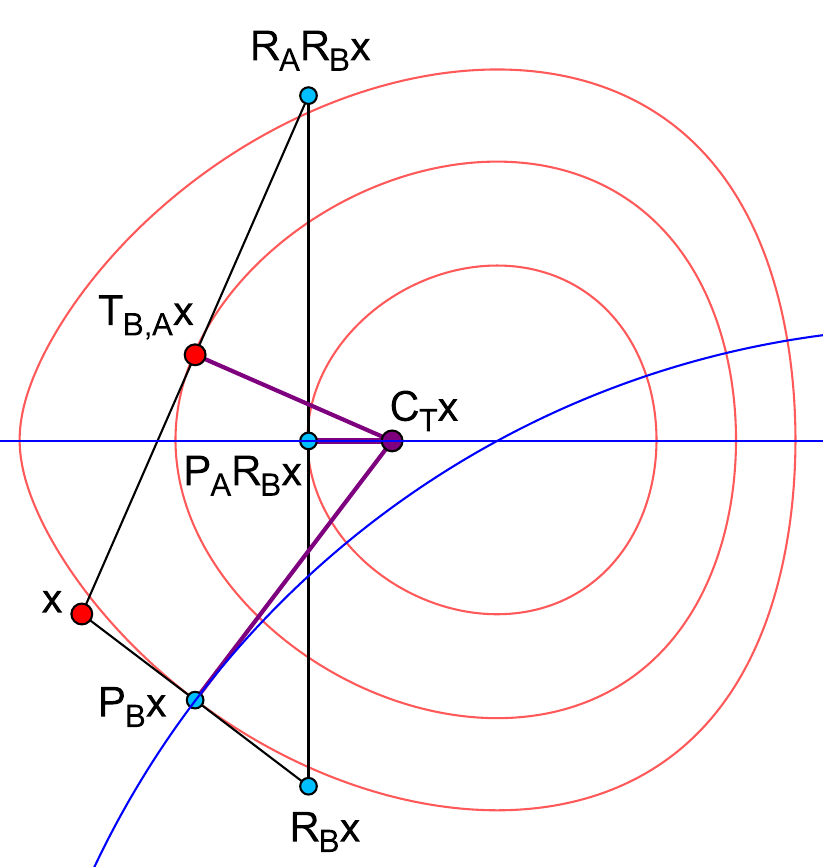}
	\end{center}
	\caption{CRM for $T_{A,B}$ (left) vs CRM for $T_{B,A}$ (right) for circle $B$ and line $A$, together with level curves for Benoist's Lyapunov function.}\label{fig:Lyapunovcircle}
\end{figure}

At the same time, a parallel branch of research has grown from the Douglas--Rachford feasibility problem tree. Behling, Bello Cruz, and Santos introduced what has become known as the circumcentered reflection method (CRM) \cite{bauschke2018circumcentermappings,behling2018linear}. As is locally true of the work of Giladi and R{\"u}ffer, the prototypical setting of Behling, Bello Cruz, and Santos was the convex feasibility problem of finding intersections of affine subspaces. The authors of \cite{DHL2019} illuminated a connection between CRM and Newton--Raphson method, and used it to show super-linear and quadratic local convergence for prototypical problems. Behling, Bello Cruz, and Santos have since used a similar geometric argument to show that the circumcentered reflection method outperforms alternating projections and Douglas--Rachford method for the product space convex feasibility problem \cite{behling2019convex}. See also \cite{arefidamghani2021circumcentered}.

\subsection{Contributions and outline}\label{sec:contributions}

In Sections~\ref{s:introduction} and \ref{s:preliminaries}, we introduce preliminaries.

In Section~\ref{s:CRM}, we show that for many prototypical feasibility problems for which the Lyapunov functions for the Douglas--Rachford dynamical system are known, CRM may be characterized as a \emph{gradient descent} method applied to the Lyapunov function with a special step size (Theorem~\ref{thm:gradientdescent}, Corollary~\ref{cor:gradient}). We also uncover Lyapunov function gradient relationships for subgradient descent methods and Newton--Raphson method (Proposition~\ref{prop:NewtonRaphson}). 

In Section~\ref{s:LT}, we introduce the class of operators $\mathbf{MSS}(V)$ that minimize spherical surrogates for Lyapunov functions. We then show that for the class of problems considered in Section~\ref{s:CRM}, CRM actually returns the minimizer of a \textit{spherical} \textit{surrogate} for the Lyapunov function (Theorem~\ref{thm:quadratic}). We then introduce a new operator (Definition~\ref{def:LT}) that belongs to $\mathbf{MSS}(V)$ with very few assumptions (Theorem~\ref{thm:LT}). We design this new operator to depend only on the governing sequence. This means it can be implemented in a much more general setting than feasibility problems with reflection substeps. For example, one could apply it with black box applications when only the governing sequence is known and subproblem solutions (e.g. proximal points, reflections) are not available. Naturally, this also means that the new method can be used in circumstances when algorithm parameters are fixed (e.g. black box scenarios) or a theoretically optimal choice for them is unknown (such as \cite{dizon2020centering,LLS}). 

In Section~\ref{s:duality}, we apply this new operator to the dual of ADMM for the basis pursuit problem (Section~\ref{s:duality}). In this setting, its lack of dependence on substeps allows it to succeed where a generalization of CRM fails (for reasons we describe). This is also the \emph{first} primal/dual framework and implementation for a method that uses the circumcenter. For warm-started applications (e.g. optimal power flow), one might expect the starting point of iteration to lie near or within the local basin of attraction to a fixed point, so that steps obtained by our approach---minimizing a surrogate for the Lyapunov function for the \textit{dual} iterates---may be the preferred updates from early on in the computation. For continuous optimization problems, one can use objective function checks to choose between Lyapunov surrogate updates and regular updates, as we do for our basis pursuit example. One can compute both such updates in parallel. 

In Section~\ref{s:conclusion}, we conclude with suggested further research.

\section{Preliminaries}\label{s:preliminaries}

The following introduction to the Douglas--Rachford method is quite standard. We closely follow \cite{DHL2019}, which is an abbreviated version of that found in the survey of Lindstrom and Sims \cite{LSsurvey}. 

\subsection{The Douglas--Rachford Operator and Method}\label{ss:DR}

The problem \eqref{objective} is frequently presented in the slightly different form
\begin{equation}\label{eqn:operator_sum_problem}\tag{P}
	\text{Find}\quad x \quad \text{such that}\quad 0 \in (\A+\B)x,
\end{equation}
where $\A$ and $\B$ are often (not always) maximally monotone operators. When the operators are subdifferential operators $\partial f$ and $\partial (g\circ M)$ for the convex functions $f$ and $g\circ M$, one recovers \eqref{objective}. Whenever a set $S$ is closed and convex, its indicator function $\iota_S$, defined in  \eqref{def:indicator}, is lower semicontinuous and convex, while its subdifferential operator %
is the normal cone operator of set $S$. The resolvent $J_{\gamma F} := (\Id+\gamma F)^{-1}$ for a set-valued mapping $F$
generalizes the proximity operator
\begin{equation}\label{def:prox}
	\prox_{\gamma f}(x) = \underset{z \in \R^n}{\argmin}\left( f(z)+\frac{1}{2\gamma} \|x-z\|^2\right) = J_{\gamma \partial f}(x)
\end{equation}
In particular, the resolvent $J_{N_S}$ of the normal cone operator $N_S$ for a closed set $S$ is simply the projection operator given by
$$\mathbb{P}_S(x) := \left \{ z \in S : \|x - z\| = \inf_{z' \in S}\|x - z'\|\right \}=\prox_{\iota_S}(x).$$
When $S$ is nonconvex, $\mathbb{P}_S$ is generically a set-valued map whose images may contain more than one point. For the prototypical problems we discuss, $\mathbb{P}_S$ is always nonempty, and we work with a selector 
$P_S:E \rightarrow S: x \mapsto P_S(x) \in \mathbb{P}_S(x)$.
The Douglas--Rachford method is defined by letting $\lambda>0, x_0\in X$, and setting
\begin{align}
	x_{n+1}\in Tx_n\;\; \text{where}\;\; T_{\A,\B}:X\rightarrow X :x&\mapsto \frac{1}{2}R_{\B}^\lambda R_{\A}^\lambda x + \frac12 x,\;\; \text{and}\;\; R_{\Cbb}^\lambda := 2J_{\Cbb}^\lambda-\Id \label{LM}
\end{align}
is the reflected resolvent operator. Though many newer results exist, the classical convergence result for the Douglas--Rachford method was given by Lions and Mercier \cite{LM} and relies on \textit{maximal monotonicity} \cite[20.2]{BC} of $\A,\B,\A+\B$. 
When the operators $\A$ and $\B$ are the normal cone operators $N_A=\partial \iota_A$ and $N_B=\partial \iota_A$, the associated resolvent operators $J_{N_A}$ and $J_{N_B}$ are the proximity operators $\prox_{\iota_A}$ and $\prox_{\iota_B}$, which may be seen from \eqref{def:prox} to just be the projection operators $P_A$ and $P_B$ respectively. In this case, \eqref{LM} becomes
\begin{align}\label{def:RRA}
	T_{A,B}:x \rightarrow \frac{1}{2}R_BR_Ax + \frac12 \Id x,\quad \text{where}\quad R_S := 2P_S - \Id,
\end{align}
is the \emph{reflection} operator for the set $S$. The operator described in \eqref{def:RRA} for solving \eqref{feasibility_problem} is a special case of the Douglas--Rachford operator described in \eqref{LM} for solving \eqref{objective}. One application of the operators $T_{A,B}$ and $T_{B,A}$ is shown in Figure~\ref{fig:Lyapunovcircle}, where $B$ is a circle and $A$ is a line. From this picture and from \eqref{def:RRA}, one may understand why DR is also known as \emph{reflect-reflect-average} (see \cite{LSsurvey} for other names).


The final averaging step apparent in the form of \eqref{LM} serves to make the operator $T$ firmly nonexpansive in the convex setting \cite{BC}, but Eckstein and Yao actually note that this final averaging step of blending the identity with the nonexpansive operator $R_BR_A$ serves the important geometric task of ensuring that the dynamical system admitted by repeated application of $T_{A,B}$ does not \emph{merely} orbit the fixed point at a constant distance without approaching it \cite{eckstein2012augmented}. In this sense, the tendency of splitting methods to spiral actually motivates the final step of construction for DR: averaging may be viewed as a centering method. It is a safe centering method in the variational sense that it adds theoretically advantageous nonexpansivity properties to the operator $T$ rather than risking those properties in the way that a more bold centering step, like circumcentering, does. 

While fixed points may not be feasible, they allow for quick recovery of feasible points, since
$(x \in \Fix T_{A,B}) \implies P_A x \in A \cap B$ (see, for example, \cite{LSsurvey}). Because of this, the sequence $P_A x_n$ is sometimes referred to as the shadow sequence of $x_n$ (on $A$). While fixed points may not be feasible, they allow for quick recovery of feasible points, since $(x \in \Fix T_{A,B}) \implies P_A x \in A \cap B$ (see, for example, \cite{LSsurvey}). Similarly, for the more general operator $T_{\A,\B}$ in \eqref{LM}, maximal monotonicity of $\A$ and $\B$ is sufficient to guarantee that the points in $J_{\A}(\Fix T_{\A,\B})$ solve \eqref{eqn:operator_sum_problem}; see \cite[Proposition~26.1]{BC}.

While the convergence of DR for convex problems is well-known, the method also solves many nonconvex problems. In addition to \cite{LSsurvey}, we refer the interested reader to the excellent survey of Arag{\'o}n Artacho, Campoy, and Tam \cite{artacho2019douglas}. Li and Pong have also provided some local convergence guarantees for the more general optimization problem \eqref{objective} in \cite{LP}. Critically for us, DR also has a dual relationship with ADMM, which we will describe and exploit in Section~\ref{s:duality}.

\subsection{Lyapunov functions and stability}\label{s:stability}

This abbreviated introduction to Lyapunov functions follows those of greater detail in the works of Giladi and R{\"u}ffer \cite{giladi2019lyapunov} and of Kellett and Teel \cite{kellett2003advances,kellett2005robustness}. 

Let $U \subset E$ where $E$ is a Euclidean space, $T:U \rightrightarrows U$ be a set-valued operator, and define the difference inclusion:
$x_{n+1} \in Tx_n, \quad n \in \N$.
We say that $\varphi(x_0,\cdot):\N \rightarrow E$ is a \emph{solution} for the difference inclusion, with initial condition $x_0 \in U$, if it satisfies
\begin{align*} \varphi(x_0,0)=x_0\quad\text{and}\quad(\forall n \in \N)\quad \varphi(x_0,n+1) \in T(\varphi(x_0,n)).
\end{align*}

\begin{definition}[Lyapunov function {\cite[Definition~2.4]{kellett2005robustness}}]\label{def:Lyapunov}
	Let $\omega_1,\omega_2:E \rightarrow \R_+$ be continuous functions and $T:U \rightrightarrows U$. A function $V:U \rightarrow \R_{+}$ is said to be a Lyapunov function with respect to $(\omega_1,\omega_2)$ on $U$ for the difference inclusion $x^+ \in Tx$, if there exist continuous, zero-at-zero, monotone increasing, unbounded $\alpha_1,\alpha_2:\R_{+}\rightarrow \R_{+}$ such that for all $x \in U$,
	\begin{alignat*}{1}
		&\alpha_1(\omega_1(x)) \leq V(x) \leq \alpha_2(\omega_2(x)),\quad \underset{x^+ \in Tx}{\sup} \; V(x^+) \leq V(x)e^{-1},\quad \text{and}\\
		&V(x)=0\;\; \iff \;\; x \in \mathcal{A}\quad \text{where}\\
		&\mathcal{A}:=\bigg \{\xi \in U: \underset{k \in \N,\; (\forall j\in \{0,\dots,k-1\})\; \varphi(\xi,j+1) \in T(\varphi(\xi,j))}{\sup} \omega_1(\varphi(\xi,k))=0 \bigg \}.
	\end{alignat*}
	When $\omega_1=\omega_2=\omega$, $\mathcal{A}$ is closed and 
	$\mathcal{A}:=\{x \in U\;:\; \omega(x)=0  \}$.
\end{definition}
We forego the usual definition of \emph{robust} $\mathcal{KL}$-stability (see, for example, \cite[Definition~2.3]{kellett2005robustness} or \cite[Definition~3.2]{giladi2019lyapunov}) in favor of remembering the contribution of Kellett and Teel \cite[Theorem~2.10]{kellett2005robustness} that robust stability is equivalent to $\mathcal{KL}$-stability when $T$ satisfies certain conditions on $U$.

\begin{theorem}[Existence of a Lyapunov function {\cite[Theorem~2.7]{kellett2005robustness}}]\label{thm:existence}
	Let $T:U \rightrightarrows U$ be upper semicontinuous on $U$ (in the sense of \cite[Definition~2.5]{kellett2005robustness}), and $T(x)$ be nonempty and compact for each $x \in U$. Then, for the difference inclusion $x^+ \in Tx$, there exists a smooth Lyapunov function with respect to $(\omega_1,\omega_2)$ on $U$ if and only if the inclusion is robustly $\mathcal{KL}$-stable with respect to $(\omega_1,\omega_2)$ on $U$.
\end{theorem}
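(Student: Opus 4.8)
The plan is to prove the two implications separately, treating the ``only if'' direction (necessity of robust stability given a smooth Lyapunov function) as the routine half and the ``if'' direction (construction of a smooth Lyapunov function from robust stability) as the substantive converse theorem. For necessity, I would suppose a smooth $V$ satisfying the sandwich and decrease properties exists and iterate the one-step estimate $\sup_{x^+ \in Tx} V(x^+) \le V(x)e^{-1}$ along an arbitrary solution $\varphi$ to obtain $V(\varphi(x_0,k)) \le e^{-k}V(x_0)$. Combining this with $\alpha_1(\omega_1(\cdot)) \le V(\cdot) \le \alpha_2(\omega_2(\cdot))$ yields
\[
\omega_1(\varphi(x_0,k)) \le \alpha_1^{-1}\!\left(e^{-k}\alpha_2(\omega_2(x_0))\right),
\]
whose right-hand side is a $\mathcal{KL}$ function of $(\omega_2(x_0),k)$, establishing $\mathcal{KL}$-stability. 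Robustness then comes essentially for free from smoothness: because $V$ is continuous and $T$ is upper semicontinuous with nonempty compact values, the strict gap $e^{-1}<1$ survives a sufficiently small, continuous, state-dependent inflation of $T$ to a perturbed inclusion $T_\delta$, so the same estimate (with $e^{-1}$ relaxed to $e^{-1/2}$, say) persists for $T_\delta$, which is the definition of robust $\mathcal{KL}$-stability.

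For sufficiency I would follow the Teel--Praly / Kellett--Teel converse construction. First I would invoke Sontag's lemma to rewrite the robust $\mathcal{KL}$ bound $\omega_1(\varphi(x_0,k)) \le \beta(\omega_2(x_0),k)$ in the ``exponential'' form $\beta(s,k) \le \theta_2(\theta_1(s)e^{-k})$ with $\theta_1,\theta_2$ of class $\mathcal{K}_\infty$. Then I would define the candidate
\[
V(x) = \sup_{k \in \N}\;\sup_{\varphi}\;\theta_1\!\left(\omega_1(\varphi(x,k))\right)e^{k},
\]
where the inner supremum runs over solutions of the perturbed inclusion $x^+ \in T_\delta x$. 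The one-step decrease is automatic from the shift structure of the supremum over trajectories: a solution from a successor $x^+$ is a tail of a solution from $x$, so that $\theta_1(\omega_1(\varphi(x,k+1)))e^{k} = e^{-1}\theta_1(\omega_1(\varphi(x,k+1)))e^{k+1} \le e^{-1}V(x)$, and taking suprema gives $\sup_{x^+ \in Tx}V(x^+) \le e^{-1}V(x)$. The lower sandwich follows by taking $k=0$, and the upper sandwich follows from the exponentialized bound, so that $\alpha_1(\omega_1(x)) \le V(x) \le \alpha_2(\omega_2(x))$ for suitable $\mathcal{K}_\infty$ functions; continuity of $V$ is recovered from the usc/compactness hypotheses on $T$ together with the robustness margin.

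The hard part is the final smoothing step. The sup-construction produces a $V$ that is in general only continuous, whereas the theorem demands a \emph{smooth} Lyapunov function, and it is here that robust stability, as opposed to plain $\mathcal{KL}$-stability, is indispensable. The strategy is to mollify $V$ with a spatially varying bandwidth $\epsilon(x)$ that shrinks as $x$ approaches the attractor $\mathcal{A}$, yielding a smooth $\tilde V$ whose Lyapunov inequalities differ from those of $V$ by an error controlled by $\epsilon(x)$. The extra decay margin encoded in $T_\delta$ is precisely what absorbs this smoothing error, so that $\tilde V$ still obeys a strict decrease $\sup_{x^+ \in Tx}\tilde V(x^+) \le e^{-1}\tilde V(x)$ after rescaling the class-$\mathcal{K}_\infty$ bounds. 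I expect the principal obstacle to be carrying the strict decrease through the mollification \emph{uniformly} on $U$, and especially near $\mathcal{A}$, where the bandwidth and the comparison functions must be carefully coordinated; this is the step that genuinely requires robustness and that separates the converse theorem from its elementary direct half.
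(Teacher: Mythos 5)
The paper does not prove this statement at all: Theorem~\ref{thm:existence} is quoted directly from Kellett and Teel \cite[Theorem~2.7]{kellett2005robustness} and is used purely as an imported existence result---the paper explicitly disclaims constructing or analyzing Lyapunov functions itself. So there is no internal proof to compare your attempt against; the only meaningful comparison is with Kellett and Teel's original argument.

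Measured against that, your outline follows essentially the same route as the source: the easy direction by iterating the decrease estimate $V(\varphi(x_0,k)) \le e^{-k}V(x_0)$ and sandwiching with $\alpha_1,\alpha_2$, plus a perturbation argument for robustness; the converse direction by Sontag's $\mathcal{KL}$ exponentialization, a supremum over trajectories of an inflated inclusion $T_\delta$, and a state-dependent mollification whose error is absorbed by the robustness margin. But as it stands this is a roadmap, not a proof. The two steps you defer are exactly where the substance of the converse theorem lies: (i) continuity of the sup-defined $V$ does not follow from ``usc/compactness'' alone---it requires the inflation $T_\delta$ with a carefully constructed continuous radius $\delta(\cdot)$ vanishing on $\mathcal{A}$, and this is a delicate argument occupying much of the source paper; (ii) carrying the strict decrease through the smoothing uniformly near $\mathcal{A}$ is likewise the technical core, not a finishing touch. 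Additionally, in your necessity direction the claim that robustness ``comes essentially for free from smoothness'' is itself a separate nontrivial theorem in Kellett and Teel's framework (smooth Lyapunov function $\Rightarrow$ robust $\mathcal{KL}$-stability), again requiring construction of an admissible perturbation. If this statement needed an original proof here, your proposal would be a correct skeleton with the hard bones missing.
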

For us, the principal importance of this result is captured by Giladi and R{\"u}ffer's summary that: ``in essence, asymptotic stability implies the existence of a Lyapunov function.'' They used a related result to guarantee robust $\mathcal{KL}$-stability for their setting, by means of constructing the prerequisite Lyapunov function \cite{giladi2019lyapunov}. Similarly, Benoist and Dao and Tam constructed Lyapunov functions to show KL-stability in their respective settings \cite{Benoist,DT}.



\subsection{The circumcentering operator and algorithms}\label{ss:circumcentering_introduction}

Behling, Bello Cruz, and Santos introduced the circumcentered-reflections method (CRM) for feasibility problems involving affine sets \cite{circumcentering}. The idea is to update by
\begin{equation*}
	C_T(x):=C(R_B R_A(x),R_A(x),x)
\end{equation*}
where $C(x,y,z)$ denotes the point equidistant to $x,y,z$ and lying on the affine subspace defined by them: $\aff\{x,y,z\}$. If $x,y,z$ are not colinear (so their affine hull $\aff\{x,y,z\}$ has dimension $2$) then $C(x,y,z)$ is the center of the circle containing all three points. If $\{x,y,z\}$ has cardinality $2$, $C(x,y,z)$ is the average (the midpoint) of the two distinct points. If $\{x,y,z\}$ has cardinality $1$, $C(x,y,z)=x=y=z$. The circumcenter of three points is easy to compute; for a simple expression, see \cite[Theorem~8.4]{bauschke2018circumcenters}.

When $R_B R_A(x),R_A(x),x$ are distinct \emph{and} colinear, then $(R_B R_A(x),R_A(x),x) \notin \dom C$ and so $x \notin \dom C_T$. When $\dom C_T=E$, we say $C_T$ is \emph{proper}; this is the case in Behling, Bello Cruz, and Santos' prototypical setting of affine sets  \cite{circumcentering,behling2018linear}. Bauschke, Ouyang, and Wang have provided sufficient conditions to ensure that $C_T$ is proper \cite{bauschke2018circumcentermappings,bauschke2018circumcenters}.

Because $C_T$ is not generically proper, the authors of \cite{DHL2019} suggested the modest piecewise remedy of iterating
\begin{equation}\label{def:circumcenteredDR}
	\mathcal{C}_{T_{A,B}}:E \rightarrow E: \quad x\mapsto \begin{cases}
		T_{A,B}x & \text{if}\; x,\;R_Ax,\;\text{and}\;R_BR_Ax\;\;\text{are colinear}\\
		C_Tx & \text{otherwise}
	\end{cases}.
\end{equation}
$\mathcal{C}_{T_{A,B}}$ specifies to $C_T$ when $C_T$ is proper---except in the case $R_BR_Ax = x \neq R_Ax$, in which case $P_Ax \in \Fix T_{A,B}$ solves \eqref{feasibility_problem}.


\section{Spiraling, and subdifferentials of Lyapunov functions}\label{s:CRM}

We next show how CRM is related to known Lyapunov functions for nonconvex Douglas--Rachford iteration.

\subsection{Known Lyapunov constructions}

For the sphere and line, Benoist remarked that the spiraling apparent at left in Figure~\ref{fig:ellipse2} suggests the possibility of finding a suitable Lyapunov function that satisfies
\begin{itemize}
	\item[\namedlabel{A1}{\textbf{A1}}] $(\forall x \in U) \quad \langle \nabla V(x^+), x-x^+ \rangle = 0$.
\end{itemize}
The condition \ref{A1} is visible in Figure~\ref{fig:Lyapunovcircle}, where the level curves of the Benoist's Lyapunov function are tangent to the line segments connecting $x$ and $x^+$. Whenever \ref{A1} holds, we will say that the algorithm $x^+ \in Tx$ is \textit{spiraling} on $U$. In order to reformulate \ref{A1} as an explicit differential equation that may be solved for $V$, one must invert the operator $T$ locally. For this reason, it is often difficult to show \ref{A1} explicitly. Notwithstanding, this \textit{spiraling} property has been present for both the Lyapunov constructions of Dao and Tam \cite{DT} and of Giladi and R{\"u}ffer \cite{giladi2019lyapunov}.

Dao and Tam's results are broad, but we are most interested in a lemma they provided while outlining a generic process for building a Lyapunov function $V$ that satisfies \ref{A1} for $x^+ \in T_{A,B}x$. Specifically, we consider the case when $f:X \rightarrow \left ] -\infty,+\infty \right ]$ is a proper function on a Euclidean space $X$ with closed graph 
\begin{equation}\label{eqn:AB}
	B=\gra f= \{(y,f(y))| y \in \dom f\subset X \}\quad \text{and}\quad A=X\times \{0\}.
\end{equation}
In what follows, $E=X \times \R$, while $\partial f$ and $\partial^0f$ respectively denote the \emph{limiting subdifferential} and the \emph{symmetric subdifferential} of $f$; see, for example, \cite[2.3]{DT}.
\begin{lemma}[{\cite[Lemma~5.2]{DT}}]\label{lem:tangency}
	Suppose that $D\subset \dom f$ is convex and nonempty, and that $F:D \rightarrow \left]-\infty,+\infty\right]$ is convex and satisfies
	\begin{align}
		&(\forall y \in D)\;\; \partial F(y) \supset \begin{cases} 
			\left\{\frac{f(y)}{\|y^*\|^2}y^*\;|\;y^*\in \partial^0f(y) \right\} & \text{if}\;0 \notin \partial^0 f(y)\label{lem:F} \\
			\{0\} &\text{if}\;f(y)=0
		\end{cases}\\
		\text{and}\quad&V:D\times \R \rightarrow \left[-\infty,+\infty \right]:(y,\rho) \mapsto F(y)+\frac{1}{2}\rho^2.\label{lem:V}
	\end{align}
	Let $x=(y,\rho) \in \dom f \times \R$ and $x^+=(y^+,\rho^+) \in T_{A,B}(y,\rho)$. Then the following hold.
	\begin{enumerate}[label=(\alph*)]
		\item\label{lem:tangent_curve} $V$ is a proper convex function on $D \times \R$ whose subdifferential is given by
		\begin{equation}\label{V:subdiff}
			(\forall y \in \dom \partial F)\quad \partial V(y,\rho)=\partial F(y)\times \{\rho \}.
		\end{equation}
		\item\label{lem:tangent_x+} Suppose either $y^+ \in D \setminus (\partial^0f)^{-1}(0)$ and $f$ is Lipschitz continuous around $y^+$, or that $y^+ \in D \cap f^{-1}(0)$. Then there exists $\bar{x}^+ = (\bar{y}^+,\rho^+) \in \partial V(x^+)$ with $\bar{y}^+ \in \partial F(y^+)$ such that
		\begin{equation}\label{V:subdiff2}
			\langle \bar{x}^+,x-x^+ \rangle = \langle (\bar{y}^+,\rho^+),(y,\rho)-(y^+,\rho^+) \rangle = 0.
		\end{equation}
	\end{enumerate}
\end{lemma}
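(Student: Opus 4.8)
The plan is to treat the two parts separately, with part \ref{lem:tangent_curve} being routine convex calculus and part \ref{lem:tangent_x+} resting on the explicit geometry of $T_{A,B}$ together with a normal-cone description of the projection onto $\gra f$. For \ref{lem:tangent_curve} I would first observe that the hypothesis \eqref{lem:F} exhibits an explicit subgradient of $F$ at the points of interest, so $F$ is finite there and hence proper; being a sum of convex functions, $V$ is then proper and convex on $D\times\R$. Since $V(y,\rho)=F(y)+\tfrac12\rho^2$ separates into the blocks $y$ and $\rho$, with $\tfrac12(\cdot)^2$ differentiable with derivative $\rho$, the separable-sum rule for subdifferentials gives $\partial V(y,\rho)=\partial F(y)\times\{\rho\}$, which is \eqref{V:subdiff}.

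For \ref{lem:tangent_x+} the first step is to compute $x^+$ explicitly. Using $A=X\times\{0\}$ one has $P_A(y,\rho)=(y,0)$ and $R_A(y,\rho)=(y,-\rho)$; writing $(y^+,f(y^+))=P_B(y,-\rho)$ for the projection onto $B=\gra f$ and carrying out the reflect--reflect--average step \eqref{def:RRA} yields $x^+=(y^+,\rho^+)$ with $\rho^+=\rho+f(y^+)$. Thus the first coordinate of $x^+$ is exactly the projection point, and $\rho-\rho^+=-f(y^+)$. By part \ref{lem:tangent_curve} every $\bar x^+\in\partial V(x^+)$ has the form $(\bar y^+,\rho^+)$ with $\bar y^+\in\partial F(y^+)$, so proving \eqref{V:subdiff2} reduces to choosing $\bar y^+$ so as to annihilate $\langle\bar y^+,y-y^+\rangle+\rho^+(\rho-\rho^+)$.

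The crux — and the step I expect to be the main obstacle — is to control $y-y^+$ through the subdifferential of $f$. Because $(y^+,f(y^+))$ minimizes $w\mapsto\|y-w\|^2+(\rho+f(w))^2$, the residual $(y,-\rho)-(y^+,f(y^+))=(y-y^+,-\rho^+)$ is a proximal normal to $\gra f$ at $(y^+,f(y^+))$. Under the Case~1 hypothesis ($y^+\in D\setminus(\partial^0 f)^{-1}(0)$ with $f$ Lipschitz around $y^+$), I would invoke the Mordukhovich normal-cone calculus for graphs of locally Lipschitz functions (as exploited by Dao and Tam) to write this normal as $\lambda(y^*,-1)$ for some $y^*\in\partial^0 f(y^+)$ and $\lambda\in\R$; the second coordinate forces $\lambda=\rho^+$, and the first then gives $y-y^+=\rho^+ y^*$. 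This step is delicate precisely because it amounts to inverting $T_{A,B}$ locally and identifying normals to the possibly nonconvex graph with symmetric subgradients, which is where the Lipschitz assumption is indispensable.

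With that relation in hand the conclusion is a short computation. In Case~1 one has $y^*\neq 0$, so \eqref{lem:F} permits the choice $\bar y^+=\frac{f(y^+)}{\|y^*\|^2}y^*\in\partial F(y^+)$, whence $\langle\bar y^+,y^*\rangle=f(y^+)$ and
\[
\langle\bar x^+,x-x^+\rangle=\langle\bar y^+,\rho^+ y^*\rangle+\rho^+\bigl(\rho-\rho^+\bigr)=\rho^+\bigl(\langle\bar y^+,y^*\rangle-f(y^+)\bigr)=0.
\]
In Case~2 ($f(y^+)=0$) the hypothesis \eqref{lem:F} permits $\bar y^+=0$ while $\rho-\rho^+=-f(y^+)=0$, so both terms vanish directly. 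This establishes \eqref{V:subdiff2} and completes part \ref{lem:tangent_x+}.
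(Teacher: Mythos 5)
There is nothing to compare against inside the paper: this lemma is imported verbatim from \cite[Lemma~5.2]{DT} and stated without proof, so your attempt has to be judged against the original source and against how the present paper uses the same machinery elsewhere. Judged that way, your proof is correct, and it is essentially the argument of Dao and Tam. Part \ref{lem:tangent_curve} is indeed just the separable-sum rule for convex subdifferentials (one small caveat, inherited from the statement rather than introduced by you: properness of $V$ needs $D$ to contain at least one point at which \eqref{lem:F} genuinely forces $\partial F\neq\emptyset$, e.g.\ a zero of $f$; your phrase ``points of interest'' quietly assumes this). For part \ref{lem:tangent_x+}, your two ingredients are exactly the standard ones: the explicit form of the DR step in this geometry, $x^+=(y^+,\rho+f(y^+))$ with $(y^+,f(y^+))\in\mathbb{P}_B(y,-\rho)$, and the identification of the projection residual $(y-y^+,-\rho^+)$ as a graph normal, giving $y-y^+=\rho^+y^*$ for some $y^*\in\partial^0f(y^+)$. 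The latter is precisely \cite[Lemma~3.4]{DT}, i.e.\ the relation \eqref{lem34}, which this paper itself invokes in Case~2 of the proof of Theorem~\ref{thm:gradientdescent}\ref{PBH}; the cancellation you perform in your Case~1 is the same algebra as \eqref{PBHlong}, transported from $P_Bx$ to $T_{A,B}x$. Your case split also tracks the hypotheses faithfully: Lipschitz continuity is used only where the graph-normal characterization is needed (Case~1, where $0\notin\partial^0f(y^+)$ guarantees $y^*\neq0$, so the choice $\bar y^+=\frac{f(y^+)}{\|y^*\|^2}y^*$ is legitimate), while your Case~2 needs only the admissible choice $\bar y^+=0\in\partial F(y^+)$ together with $\rho-\rho^+=-f(y^+)=0$. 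In short: correct, and the same route the cited source takes; the one step you flagged as delicate---realizing the projection residual as $\lambda(y^*,-1)$ with $y^*$ a symmetric subgradient---is exactly the step the literature packages as \cite[Lemma~3.4]{DT}, so citing it (as the paper does elsewhere) would be a legitimate alternative to re-deriving it from Mordukhovich normal-cone calculus.
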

The next example is a special case of \cite[Example~6.1]{DT} and illustrates Lemma~\ref{lem:tangency}.
\begin{example}[Constructing $V$ for $T_{A,B}$ when $A,B$ are lines {\cite[Example~6.1]{DT}}]\label{ex:2lines}
	Let $f:\R \rightarrow \R$ be the linear function $f:y\rightarrow \arctan(\theta) y$ for some $\theta \in \left]-\pi/2,\pi/2 \right[$. Then \eqref{lem:F} amounts to $\nabla F(y) = f(y)/\nabla f(y)=y$, and so $F:y \rightarrow y^2/2+c$ and $V(y,\rho) =(y^2+\rho^2)/2+c/2$. 
\end{example}

\begin{figure}[t]
	\begin{center}
		\includegraphics[width=.45\textwidth]{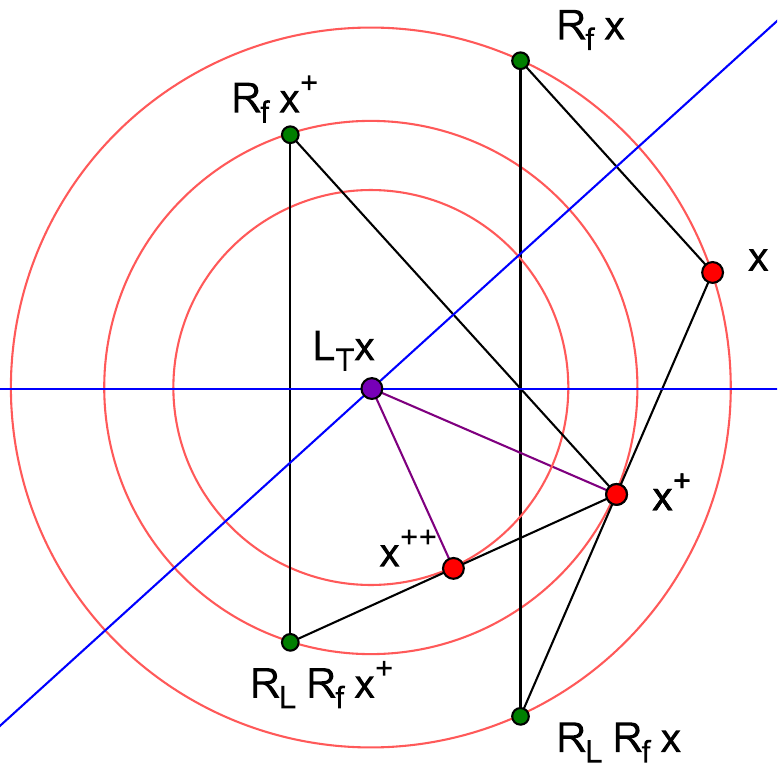}\quad \includegraphics[width=.50\textwidth]{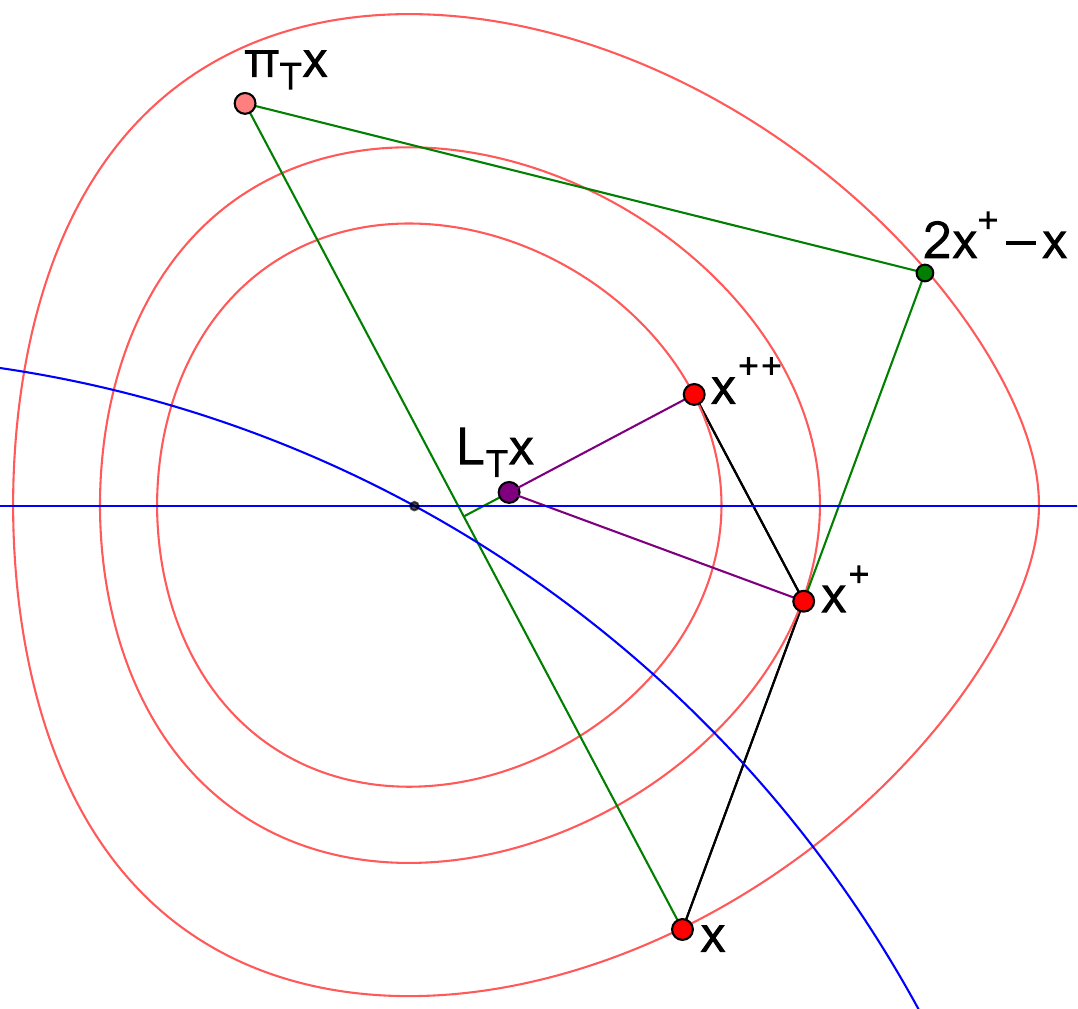}
	\end{center}
	\caption{Construction of $L_{T_{B,A}}$ for Example~\ref{ex:2lines} (left), and for the circle $B$ and line $A$  (right), together with level curves for Lyapunov functions.}\label{fig:lines_and_quadratic}
\end{figure}

Example~\ref{ex:2lines}, for which the modulus of linear convergence of $T_{A,B}$ is known to be $\cos(\theta)$ \cite{BCNPW} (see also \cite[Figure~2]{giladi2019lyapunov}), is shown in Figure~\ref{fig:lines_and_quadratic} (left), where we abbreviate $\gra f$ by $f$. In the case when $A$ is the union of two lines and $B$ is a third line, Giladi and R{\"u}ffer used two local quadratic Lyapunov functions of this type $V_i(x)=\|x-p_i\|^2, i=1,2,$ \cite[Proposition~5.1]{giladi2019lyapunov} for the two respective feasible points $p_1,p_2 \in A \cap B$ in their construction of a global Lyapunov function \cite[Theorem~5.3]{giladi2019lyapunov}.

\subsection{Circumcentered Reflections Method and Gradients of $V$}

Now we will explore the relationship between circumcentering methods and the underlying Lyapunov functions that describe local convergence of the operator $T$. The following definition will simplify the exposition.

\begin{definition}
	For $y,z \in E$ we define
	\begin{equation*}
		H(y,z) := \frac{y+z}{2} + \{y-z\}^\perp
	\end{equation*}
	to be the perpendicular bisector of the line segment adjoining $y$ and $z$. 
\end{definition}	
We will also make use of the following properties (which have been used in various places; for example, \cite{AB,DT}).
\begin{proposition}\label{prop:handy}
	Let  $A,B$ be as in \eqref{eqn:AB}, and let $D,V,F$ be as in Lemma~\ref{lem:tangency}. Then the following hold.
	\begin{enumerate}[label=(\roman*)]
		\item\label{p:indirectmotion} $R_A$ is an indirect motion in the sense that it preserves pairwise distances\footnote{To see why distances are preserved, set $x_1=x_2=x_3-x_4$ and notice that $\langle x_1,x_2 \rangle = \langle R_A x_1,R_Ax_2 \rangle$ becomes $\|x_3-x_4\|=\|R_A(x_3-x_4)\|=\|R_Ax_3-R_Ax_4\|$ where the final equality simply uses the linearity of $R_A$} and angles. In other words:
		\begin{equation*}
			(\forall x_1,x_2 \in X \times \R)\quad \langle x_1,x_2 \rangle = \langle R_Ax_1,R_Ax_2 \rangle.
		\end{equation*}
		\item\label{p:Vsymmetry} $V$ is symmetric about $A$ on $D\times \R$ in the sense that
		\begin{subequations}
			\begin{align}
				&(\forall (y,\rho) \in D \times \R) \quad V(y,\rho)=V(y,-\rho),\label{p:Vsymmetry1}\quad \text{and\;so}\quad V=V\circ R_A\\
				&\text{and} \quad (y^*,\rho^*) \in \partial V(y,\rho) \;\; \iff \;\; (y^*,-\rho^*) \in \partial V(y,-\rho),\label{p:partialVsymmetry}\\
				&\text{which\;allows\;us\;to\;write}\quad R_A \partial V = \partial V \circ R_A.\label{p:partialVsymmetry2}
			\end{align}
		\end{subequations}
		\item\label{p:Tequivalence} $T_{B,A}$ and $T_{A,B}$ are equivalent across the axis of symmetry $A$ in the sense that
		\begin{equation*}
			T_{B,A} = R_A T_{A,B} R_A\quad \text{and}\quad T_{A,B} = R_AT_{B,A}R_A
		\end{equation*}
	\end{enumerate}
\end{proposition}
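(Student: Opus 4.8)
The plan is to reduce all three parts to the explicit coordinate description of $R_A$. Since $A=X\times\{0\}$ is a closed linear subspace of $E=X\times\R$, the projection $P_A$ is the orthogonal projection $(y,\rho)\mapsto(y,0)$, so by \eqref{def:RRA} the reflection $R_A=2P_A-\Id$ acts by $R_A(y,\rho)=(y,-\rho)$. In particular $R_A$ is a linear involution, i.e. $R_AR_A=\Id$. Part~\ref{p:indirectmotion} is then immediate: writing $x_1=(y_1,\rho_1)$, $x_2=(y_2,\rho_2)$ and using the product inner product on $E$, one computes $\langle R_Ax_1,R_Ax_2\rangle=\langle y_1,y_2\rangle+(-\rho_1)(-\rho_2)=\langle x_1,x_2\rangle$. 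The footnote's specialization $x_1=x_2=x_3-x_4$ then recovers preservation of distances, and hence of angles.

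For part~\ref{p:Vsymmetry} I would invoke the explicit forms recorded in Lemma~\ref{lem:tangency}. Since $V(y,\rho)=F(y)+\tfrac12\rho^2$ by \eqref{lem:V} and $\rho^2=(-\rho)^2$, the identity $V(y,\rho)=V(y,-\rho)$ is immediate; composing with $R_A(y,\rho)=(y,-\rho)$ gives $V=V\circ R_A$, which is \eqref{p:Vsymmetry1}. For \eqref{p:partialVsymmetry}, equation \eqref{V:subdiff} gives $\partial V(y,\rho)=\partial F(y)\times\{\rho\}$, so $(y^*,\rho^*)\in\partial V(y,\rho)$ holds exactly when $y^*\in\partial F(y)$ and $\rho^*=\rho$; the analogous condition for $(y^*,-\rho^*)\in\partial V(y,-\rho)$ is $y^*\in\partial F(y)$ and $-\rho^*=-\rho$, which is the same. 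Reading this through the action of $R_A$ on the last coordinate yields \eqref{p:partialVsymmetry2}, since both $R_A\,\partial V(y,\rho)$ and $\partial V(R_A(y,\rho))=\partial V(y,-\rho)$ equal $\partial F(y)\times\{-\rho\}$.

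For part~\ref{p:Tequivalence} I would argue purely algebraically from \eqref{def:RRA}, namely $T_{A,B}=\tfrac12 R_BR_A+\tfrac12\Id$ and $T_{B,A}=\tfrac12 R_AR_B+\tfrac12\Id$. Conjugating by $R_A$ and cancelling the two adjacent copies via $R_AR_A=\Id$,
\begin{equation*}
R_A T_{A,B} R_A=\tfrac12 R_A R_B R_A R_A+\tfrac12 R_A R_A=\tfrac12 R_A R_B+\tfrac12\Id=T_{B,A}.
\end{equation*}
The second identity follows by conjugating this equality once more by $R_A$ and again using $R_AR_A=\Id$ on each side.

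I do not anticipate a genuine obstacle: every part collapses to the formula $R_A(y,\rho)=(y,-\rho)$ together with facts already in Lemma~\ref{lem:tangency}. The only point needing care is that, because $B=\gra f$ need not be convex, $R_B$ (and hence $T_{A,B}$, $T_{B,A}$) may be set-valued, so in \eqref{p:partialVsymmetry2} and in part~\ref{p:Tequivalence} one must track images of \emph{sets} under $R_A$ rather than of single points. This is harmless: since $R_A$ is a single-valued linear bijection, the elementwise computation above applies to each member of the image set, and the set identities follow.
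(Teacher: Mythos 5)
Your proof is correct and follows essentially the same route as the paper: part~\ref{p:indirectmotion} by the same coordinate computation with $R_A(y,\rho)=(y,-\rho)$, part~\ref{p:Vsymmetry} as an immediate consequence of \eqref{lem:V} and \eqref{V:subdiff}, and part~\ref{p:Tequivalence} from the linearity and self-inverse property of $R_A$ (the paper cites a reference for this last step, while you write out the two-line conjugation; your remark on tracking set-valued images is a harmless extra precaution).
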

\begin{proof}
	\ref{p:indirectmotion}: Let $x_1=(y_1,\rho_1),x_2=(y_2,\rho_2) \in X \times \R$. Then
	\begin{align*}
		\langle x_1,x_2 \rangle = \langle y_1,y_2 \rangle + \rho_1 \rho_2 &= \langle y_1,y_2 \rangle + (-\rho_1) (-\rho_2)= \langle R_Ax_1,R_Ax_2 \rangle.
	\end{align*}
	
	\ref{p:Vsymmetry}: We have \eqref{p:Vsymmetry1} as an immediate consequence of \eqref{lem:V}, and \eqref{p:partialVsymmetry} as an immediate consequence of \eqref{V:subdiff}.
	
	\ref{p:Tequivalence}: This follows readily from the linearity and self-inverse properties of $R_A$. See, for example, \cite[Proposition~2.5 and Theorem~2.7]{HMorder}.
\end{proof}

\subsubsection*{A motivating example}

We next prove a theorem that establishes a relationship between subgradients of $V$ and circumcentered reflection method (CRM). Let us first explain a prototypical example of this relationship, before proving it formally. At left in Figure~\ref{fig:Lyapunovcircle}, we see that
\begin{align}
	&(\exists \mu_1,\mu_2,\mu_3 \in \R)\quad \text{such\; that} \label{Benoistprototype1}\\ 
	&\resizebox{1\hsize}{!}{%
		$C(x,R_Ax,R_BR_Ax)=P_Ax-\mu_1\nabla V(P_Ax) = P_BR_Ax - \mu_2 \nabla V(P_BR_Ax) = T_{A,B}x-\mu_3\nabla V(T_{A,B}x),$%
	}\nonumber
\end{align}
where $V$ is Benoist's Lyapunov function. Likewise, at right in Figure~\ref{fig:Lyapunovcircle}, we see that
\begin{align}
	&(\exists \mu_1,\mu_2,\mu_3 \in \R)\quad \text{such\; that} \label{Benoistprototype2}\\ 
	&\resizebox{1\hsize}{!}{%
		$C(x,R_Bx,R_AR_Bx)=P_Bx-\mu_1\nabla V(P_Bx) = P_AR_Bx - \mu_2 \nabla V(P_AR_Bx) = T_{B,A}x-\mu_3\nabla V(T_{B,A}x).$%
	}\nonumber
\end{align}
In other words, the CRM updates for the chosen point $x$ in both cases may be characterized as gradient descent applied to Benoist's Lyapunov function with the special step sizes determined by \eqref{Benoistprototype1} and \eqref{Benoistprototype2} for the two operators $T_{A,B}$ and $T_{B,A}$ respectively. 

In Theorem~\ref{thm:gradientdescent}, we show that a weaker version of this holds in higher dimensional Euclidean spaces for a broader class of problems covered by the framework of Dao and Tam. Specifically, we show that, under mild conditions, the \emph{perpendicular bisector} of any given side of the triangle $(x,R_Ax,R_BR_Ax)$ or the triangle $(x,R_Bx,R_AR_Bx)$ with midpoint $p$ contains $p+\R p^*:=\{p+\mu p^*\;|\; \mu \in \R\}$ for some $p^* \in \partial V(p)$.

\subsubsection*{The general case in Euclidean space}

\begin{theorem}\label{thm:gradientdescent}
	Let $A,B$ be as specified in \eqref{eqn:AB}. Let $f$ be Lipschitz continuous on $D$, and let $D,V,F$ be as in Lemma~\ref{lem:tangency}. Suppose further that 
	\begin{equation}\label{assumptionf0}
		(\forall y \in D)(0 \in \partial^0f(y)) \implies f(y)=0.
	\end{equation}
	Let $x \in D\times \R$ and $P_B$ be single-valued\footnote{This assumption is not strictly necessary, but the greater generality of forgoing it does not merit the burdensome notation that would be needed to do so.} on $D\times \R$ (combined with the single-valuedness $P_A$, this forces $T_{A,B},T_{B,A}$ to be single-valued, simplifying the notation). Then we have the following:
	\begin{enumerate}[label=(\roman*)]
		\item\label{TABH} There exists $\bar{x}^+\in \partial V(T_{A,B}x)$ such that
		\begin{equation*}
			(\forall \mu \in \R)\;\; T_{A,B}x-\mu \bar{x}^+ \in H(x,R_BR_Ax). 
		\end{equation*}
		\item\label{PAH} $\partial V(P_Ax) \neq \emptyset$ and
		\begin{equation}\label{e:PAH}
			(\forall \mu \in \R)(\forall \bar{x} \in \partial V(P_Ax)) \quad P_Ax - \mu\bar{x} \in H(x,R_Ax).
		\end{equation}
		\item\label{PBH} There exists $x^* \in \partial V(P_Bx)$ such that 
		\begin{equation}\label{e:PBH}
			(\forall \mu \in \R) \quad P_Bx - \mu x^* \in H(x,R_Bx).
		\end{equation}
		\item\label{TBAH}There exists $\bar{x}^* \in \partial V(T_{B,A}x)$ such that
		$$
		(\forall \mu \in \R)\;\; T_{B,A}x-\mu \bar{x}^+ \in H(x,R_AR_Bx). 
		$$
	\end{enumerate}
\end{theorem}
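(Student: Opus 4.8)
The goal is to show, for each of the four items, that a suitable subgradient of $V$ at the relevant point $p$ generates a line $p+\R p^*$ lying inside the perpendicular bisector $H(\cdot,\cdot)$. I would organize the proof around the single analytic fact supplied by Lemma~\ref{lem:tangency}\ref{lem:tangent_x+}: at an admissible point $x^+=(y^+,\rho^+)\in T_{A,B}x$, there is $\bar x^+=(\bar y^+,\rho^+)\in\partial V(x^+)$ with $\langle \bar x^+,\, x-x^+\rangle=0$. Recalling the definition $H(y,z)=\tfrac{y+z}{2}+\{y-z\}^\perp$, membership $p-\mu p^*\in H(y,z)$ is equivalent to the two conditions that $p$ be the midpoint $\tfrac{y+z}{2}$ (so the affine offset is correct) and that $p^*\perp(y-z)$ (so $p-\mu p^*$ stays in the orthogonal complement for every $\mu$). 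So each item reduces to a \emph{midpoint identity} plus an \emph{orthogonality relation}.

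\textbf{Item \ref{PAH} (the base case).}
I would start here because it is the cleanest. Since $A=X\times\{0\}$, the projection $P_Ax=(y,0)$ for $x=(y,\rho)$, and $R_Ax=(y,-\rho)$, so the midpoint of $x$ and $R_Ax$ is exactly $(y,0)=P_Ax$; the midpoint identity is immediate. For orthogonality, $x-R_Ax=(0,2\rho)$, so I must check that every $\bar x\in\partial V(P_Ax)$ has zero last coordinate. By Lemma~\ref{lem:tangency}\ref{lem:tangent_curve}, $\partial V(y,0)=\partial F(y)\times\{0\}$, which is nonempty (giving the asserted $\partial V(P_Ax)\neq\emptyset$) and whose elements have last coordinate $0$, hence are orthogonal to $(0,2\rho)$. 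This yields \eqref{e:PAH} for all $\mu$ and all $\bar x$, with no appeal to part \ref{lem:tangent_x+}.

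\textbf{Items \ref{TABH} and \ref{PBH}.}
For \ref{TABH} I would apply Lemma~\ref{lem:tangency}\ref{lem:tangent_x+} directly to $x^+=T_{A,B}x$. The hypotheses of that lemma part require the relevant point to avoid $(\partial^0f)^{-1}(0)$ unless it is a zero of $f$; this is exactly where assumption \eqref{assumptionf0} and the Lipschitz continuity of $f$ on $D$ earn their keep, since together they guarantee we land in one of the two admissible cases. The lemma then supplies $\bar x^+\in\partial V(T_{A,B}x)$ with $\langle\bar x^+,x-x^+\rangle=0$, i.e.\ $\bar x^+\perp(x-T_{A,B}x)$. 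To convert this into membership in $H(x,R_BR_Ax)$ I use the DR identity $T_{A,B}x=\tfrac12(R_BR_Ax+x)$ from \eqref{def:RRA}, which says precisely that $T_{A,B}x$ is the midpoint of $x$ and $R_BR_Ax$, and that $x-T_{A,B}x=\tfrac12(x-R_BR_Ax)$ is parallel to the bisector's normal direction $x-R_BR_Ax$. Orthogonality to one is orthogonality to the other, so $T_{A,B}x-\mu\bar x^+\in H(x,R_BR_Ax)$ for all $\mu$. Item \ref{PBH} is the intermediate reflected-resolvent point: here $P_Bx$ is the midpoint of $x$ and $R_Bx=2P_Bx-\Id\,x$ by the definition of $R_B$ in \eqref{def:RRA}, and I would obtain the orthogonality $x^*\perp(x-R_Bx)$ either from a direct subdifferential computation analogous to the motivating prototype \eqref{Benoistprototype1}, or by transporting the statement through the symmetry of Proposition~\ref{prop:handy}.

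\textbf{Item \ref{TBAH}, and the main obstacle.}
The last item concerns $T_{B,A}$ rather than $T_{A,B}$, and here I would avoid re-deriving everything by exploiting Proposition~\ref{prop:handy}\ref{p:Tequivalence}, $T_{B,A}=R_AT_{A,B}R_A$, together with \ref{p:partialVsymmetry2}, $R_A\partial V=\partial V\circ R_A$, and the isometry \ref{p:indirectmotion}. The idea is to apply the already-proven item \ref{TABH} to the reflected point $R_Ax$ and then push the whole configuration back through $R_A$: isometry preserves midpoints, perpendicular bisectors, and orthogonality, while the symmetry of $\partial V$ transports the subgradient correctly, turning a statement about $H(R_Ax,R_BR_AR_Ax)$ into one about $H(x,R_AR_Bx)$. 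The \textbf{main obstacle} I anticipate is bookkeeping in item \ref{TBAH}: carefully tracking how $R_A$ acts on each of the three constituent points and on the chosen subgradient so that the transported orthogonality and midpoint relations land on \emph{exactly} the pair $(x,R_AR_Bx)$ named in the statement, and confirming that the admissibility hypotheses of Lemma~\ref{lem:tangency}\ref{lem:tangent_x+} for $R_Ax$ still hold (which they do, since $R_A$ fixes the $X$-coordinate and \eqref{assumptionf0} is a condition on $y$ alone). A secondary subtlety, relevant to \ref{TABH} and \ref{PBH}, is the alignment between the last-coordinate component $\rho^+$ delivered by Lemma~\ref{lem:tangency}\ref{lem:tangent_x+} and the actual last coordinate of the point $x^+$, which must match for the inner product to collapse to zero; I would verify this matching explicitly rather than assume it.
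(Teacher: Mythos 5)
Your plan coincides with the paper's own proof on three of the four items. For \ref{TABH} the paper likewise applies Lemma~\ref{lem:tangency}\ref{lem:tangent_x+} at $x^{+}=T_{A,B}x$ and converts $\langle \bar{x}^{+},x-x^{+}\rangle=0$ into membership in $H(x,R_BR_Ax)$ via the midpoint identity $T_{A,B}x=\tfrac12(x+R_BR_Ax)$; for \ref{PAH} it likewise uses $\partial V(y,0)=\partial F(y)\times\{0\}$, although note that the nonemptiness you assert is not automatic: the paper runs two cases ($f(y)=0$ versus $f(y)\neq 0$), using \eqref{assumptionf0}, \eqref{lem:F} and the Lipschitz continuity of $f$ to exhibit an element of $\partial F(y)$, and you should do the same rather than state it; and for \ref{TBAH} the paper performs exactly the reflection transport you describe, applying \ref{TABH} at $(y,-\rho)=R_Ax$ and pushing the subgradient and the orthogonality back through Proposition~\ref{prop:handy}\ref{p:indirectmotion}--\ref{p:Tequivalence}.

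The genuine gap is item \ref{PBH}, and neither of your two proposed routes is viable as written. The symmetry route fails because $R_A$ does not preserve $B$: it maps $\gra f$ to $\gra(-f)$, so the configuration $(x,R_Bx,P_Bx)$ is not the $R_A$-image of any configuration covered by \ref{TABH}, and Proposition~\ref{prop:handy} relates $T_{A,B}$ to $T_{B,A}$ but says nothing about $P_Bx$ itself. The ``direct subdifferential computation'' route is the right one, but it needs an ingredient your proposal never names: a first-order characterization of the projection onto the graph of a nonsmooth $f$. The paper invokes \cite[Lemma~3.4]{DT} --- $(q,f(q))=P_B(y,\rho)$ is characterized by $y=q+\left(f(q)-\rho\right)q^{*}$ for some $q^{*}\in\partial^0 f(q)$ --- and then verifies that the particular subgradient
\begin{equation*}
x^{*}=\left(\frac{f(q)}{\|q^{*}\|^2}q^{*},\,f(q)\right)\in\partial V(P_Bx)
\end{equation*}
satisfies $\langle x^{*},x-P_Bx\rangle=0$ by exact cancellation of the two inner-product terms (with a separate trivial case when $0\in\partial^0f(q)$, where \eqref{assumptionf0} forces $f(q)=0$ and $x^{*}=0$ works). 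Moreover, before any subdifferential formula can be used at $P_Bx=(q,f(q))$ one must know $q\in D$; the paper proves this with the identity $P_Bx-(0,\rho)=T_{A,B}R_Ax\in D\times\R$, a step absent from your sketch. That same identity is, incidentally, what would make a transport argument for \ref{PBH} work: apply Lemma~\ref{lem:tangency}\ref{lem:tangent_x+} at $R_Ax$, then exchange the last coordinate of the resulting subgradient from $f(q)-\rho$ to $f(q)$ using \eqref{V:subdiff}. But that exchange, and the identity that licenses it, are precisely the concrete work missing from your proposal.
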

\begin{proof}
	\ref{TABH}: Because the conditions of Lemma~\ref{lem:tangency}\ref{lem:tangent_x+} are satisfied, we have that there exists $\bar{x}^+ \in \partial V(T_{A,B}x)$ that satisfies \eqref{V:subdiff2}, and so $\bar{x}^+ \in \{x-T_{A,B}x\}^\perp=\{x-R_B R_A x\}^\perp$. Using the fact that $\{x-R_B R_A x\}^\perp$ is a subspace and $T_{A,B}x \in H(x,R_B R_A x)$, we obtain that	
	\begin{equation*}
		(\forall \mu \in \R)\quad T_{A,B}x -\mu \bar{x}^+ \in H(x,R_B R_Ax).
	\end{equation*}
	This shows \ref{TABH}.
	
	\ref{PAH}: Because $x \in D \times \R$, we have that $x=(y,\rho)$ for some $y \in D$, and so $P_Ax = (y,0) \in D\times\{0\}$. We will consider two cases: $f(y)=0$ and $ f(y)\neq0$. 
	
	\textbf{Case 1:} Let $f(y)=0$. Combining $(f(y)=0)$ together with the fact that $y \in D$, we obtain  
	\begin{align}\label{PAH1}
		&\partial V(P_Ax) = \partial V(y,0) \stackrel{(a)}{=} \partial F(y)\times\{0\} \stackrel{(b)}{=} \{0\}_X \times\{0\}_{\R} \subset H(x,R_Ax).
	\end{align}
	Here (a) is from \eqref{V:subdiff} while (b) uses \eqref{lem:F} together with the fact that $f(y)=0$. Now, combining \eqref{PAH1} with the fact that $A = H(x,R_Ax)$ is a subspace and $P_A(x) \in A$, we have 
	\begin{equation*}
		(\forall \mu \in \R)\quad P_Ax-\mu \bar{x} = P_Ax \in H(x,R_Ax),
	\end{equation*}
	which shows \ref{PAH} in the case when $f(y)=0$, and consequently for all cases when $0 \in \partial f^0(y)$. 
	
	\textbf{Case 2:} Now suppose $f(y) \neq 0$. Then, by our assumption \eqref{assumptionf0}, we have that $0 \notin \partial^0f(y)$. Consequently, we have from \eqref{lem:F} and the Lipschitz continuity of $f$ that $\partial F(y) \neq \emptyset$. Applying Lemma~\ref{lem:tangency}\ref{lem:tangent_curve}, we have from \eqref{V:subdiff} that 
	\begin{align}\label{PAH2}
		&\partial V(P_A(x)) = \partial V(y,0) = \partial F(y)\times\{0\} \subset H(x,R_Ax)
	\end{align}
	Combining \eqref{PAH2} with the fact that $\partial F(y)\neq \emptyset$, we have that $\partial V(P_Ax)$ is nonempty. Again combining with the fact that $H(x,R_Ax)=A$ is a subspace and $P_Ax \in A$, we obtain \eqref{e:PAH}.
	
	\ref{PBH}: Because $x\in D \times \R$, set $x=(y,\rho)$ with $y\in D, \rho \in \R$. Since $P_Bx \in B = \gra f$, we have that $P_Bx=(q,f(q))$ for some $q \in X$. We will show that $q \in D$. First notice that $x'=(y,-\rho) \in D \times \R$ and that $P_Bx = P_BR_Ax'$, because $x=(y,\rho)=R_A(y,-\rho)=R_Ax'$. Then notice that we may use the linearity of $R_A$ to write
	\begin{align*}
		P_Bx-(0,\rho)= P_BR_Ax'-(0,\rho)&=P_BR_Ax'-\frac{1}{2}(y,\rho)+\frac{1}{2}(y,-\rho)\\
		&=P_BR_Ax' - \frac{1}{2}R_Ax'+\frac{1}{2}x'\\
		&=\frac{1}{2}(2P_BR_Ax')-\frac{1}{2}R_Ax'+\frac{1}{2}x'\\
		&=\frac{1}{2}R_BR_Ax'+\frac{1}{2}x'\\
		&=T_{A,B}x' \in D \times \R.
	\end{align*}
	which shows that $P_Bx \in D \times \R$, and so $q \in D$. We will consider two cases: $0 \in \partial^0f(q)$ and $0 \notin \partial^0f(q)$. 
	
	\textbf{Case 1:} Let $0 \in \partial^0f(q)$. Then we have by assumption that $f(q)=0$, and so from \eqref{lem:F} that $\{0\} \subset \partial F(q)$. Using this fact together with \eqref{V:subdiff}, we have that
	\begin{equation*}
		\partial V(q,f(q)) = \partial F(q)\times\{f(q)\} \ni (0,f(q))=(0,0)=0=:x^*.
	\end{equation*}
	Clearly $x^*=0 \in \{x-R_Bx\}^\perp$. Using this, together with the fact that $P_Bx \in H(x-R_Bx)^\perp$, we have that \eqref{e:PBH} holds.
	
	\textbf{Case 2:} Let $0 \notin \partial^0f(q)$. We have from \cite[Lemma~3.4]{DT} that the relationship $(q,f(q))=P_B(y,\rho)$ is characterized by the existence of $q^* \in \partial^0f(q)$ such that
	\begin{equation}\label{lem34}
		y=q+\left(f(q)-\rho \right)q^*.
	\end{equation}
	Because $0 \notin \partial^0f(q)$, we have from \eqref{lem:F} that
	\begin{equation}\label{PBH1}
		\partial F(q) \ni \frac{f(q)}{\|q^*\|^2}q^*.
	\end{equation}
	Using \eqref{PBH1} together with \eqref{V:subdiff}, we have that
	\begin{equation}\label{def:x*}
		\partial V(P_Bx)= \partial V(q,f(q)) = \partial F(q)\times\{f(q)\} \ni \left(\frac{f(q)}{\|q^*\|^2}q^*,f(q)\right)=:x^*.
	\end{equation}
	Thus we have that
	\begin{subequations}\label{PBHlong}
		\begin{align}
			\langle x-P_Bx,x^* \rangle_{X\times \R} &= \left\langle (y-q,\rho-f(q)),x^* \right\rangle_{X\times \R} \label{PBHlong1}\\
			&=\left \langle \left( \left(f(q)-\rho\right) q^*,\rho-f(q) \right),x^* \right\rangle_{X\times \R} \label{BHlong2}\\
			&=\left\langle \bigg( \left(f(q)-\rho\right) q^*,\rho-f(q) \bigg),\left(\frac{f(q)}{\|q^*\|^2}q^* ,f(q) \right) \right\rangle_{X\times \R} \label{PBHlong3}\\
			&=\left \langle \left(f(q)-\rho \right)q^*, \frac{f(q)}{\|q^*\|^2}q^*   \right \rangle_{X} + \left \langle \rho - f(q), f(q) \right \rangle_{\R} \label{PBHlong4}\\
			&=\bigg((f(q)-\rho)f(q)\frac{\|q^*\|^2}{\|q^*\|^2}\bigg) + \big((\rho-f(q))f(q)\big)\nonumber\\
			&=\big((f(q)-\rho)f(q)\big) + \big((\rho-f(q))f(q)\big)\nonumber\\ 
			&= 0.\nonumber
		\end{align}
	\end{subequations}
	Here \eqref{PBHlong1} is true from the definitions $x=(y,\rho)$ and $P_Bx=(q,f(q))$, \eqref{BHlong2} uses the equality \eqref{lem34}, \eqref{PBHlong3} uses the identity \eqref{def:x*}, \eqref{PBHlong4} splits the single dot product term on $X \times \R$ into a sum of two dot product terms on $X$ and $\R$, and what remains is linear algebra.
	
	Altogether, \eqref{PBHlong} shows that $x^* \in \{x-P_Bx\}^\perp = \{x-R_Bx \}^\perp$. Combining this with the fact that $\{x-R_Bx \}^\perp$ is a subspace and that $P_Bx \in H(x,R_Bx)$, we obtain \eqref{e:PBH}. This concludes the proof of \ref{PBH}.
	
	\ref{TBAH}: Let $x = (y,\rho) \in D \times \R$. We know from \ref{TABH} of this Theorem that 
	\begin{equation}\label{TBAH2}
		(\exists x^* \in \partial V(T_{A,B}(y,-\rho)))\quad \text{such\;that}\quad \langle x^*,(y,-\rho)-T_{A,B}(y,-\rho)\rangle =0.
	\end{equation}
	Consequently, we obtain that
	\begin{subequations}\label{TBAH3}
		\begin{align}
			0&=\langle x^*,(y,-\rho)-T_{A,B}(y,-\rho)\rangle\label{TBAH3a}\\
			&=\langle R_A x^*,R_A(y,-\rho)-R_AT_{A,B}(y,-\rho)\label{TBAH3b}\\
			&=\langle R_Ax^*,(y,\rho)-R_AT_{A,B}R_A(y,\rho)\rangle \nonumber\\
			&=\langle R_Ax^*,x-T_{B,A}x \rangle,\label{TBAH3c}
		\end{align}
	\end{subequations}
	where \eqref{TBAH3a} is true from \eqref{TBAH2}, \eqref{TBAH3b} follows from Proposition~\ref{prop:handy}\ref{p:indirectmotion}, and \eqref{TBAH3c} uses Proposition~\ref{prop:handy}\ref{p:Tequivalence}. We also have that 
	\begin{align}
		R_A \partial V(T_{A,B}(y,-\rho)) \stackrel{(a)}{=} \partial V(R_AT_{A,B} (y,-\rho)) =\partial V(R_AT_{A,B} R_A(y,\rho)) \stackrel{(b)}{=} \partial V(T_{B,A}x),\label{TBAH4}
	\end{align}
	where (a) uses \eqref{p:partialVsymmetry2} from Proposition~\ref{prop:handy}\ref{p:Vsymmetry} and (b) uses Proposition~\ref{prop:handy}\ref{p:Tequivalence}. Now combining \eqref{TBAH4} with the fact that $x^* \in \partial V(T_{A,B}(y,-\rho)$ from \eqref{TBAH2}, we have that
	\begin{equation}\label{TBAH5}
		R_Ax^* \in \partial V(T_{B,A}x).
	\end{equation}
	Combining \eqref{TBAH3} and \eqref{TBAH5}, we have that 
	\begin{equation}\label{TBAH6}
		\bar{x}^* := R_Ax^* \in \{x-T_{B,A}x\}^\perp = \{x-R_AR_Bx\}^\perp \quad \text{and}\quad \bar{x}^* \in \partial V(T_{B,A}x).
	\end{equation}
	Combining \eqref{TBAH6} with the fact that $\{x-R_AR_Bx\}^\perp$ is a subspace and $T_{A,B}x \in H(x,R_AR_Bx)$, we have that $\bar{x}^*$ satisfies \ref{TBAH}. This concludes the proof of \ref{TBAH}, completing the proof of the theorem.	
\end{proof}

Theorem~\ref{thm:gradientdescent} makes the following result on $\R^2$ easy to show.

\begin{corollary}\label{cor:gradient}
	Let $f:\R \rightarrow \R$ be continuous and differentiable, let  $A,B$ be as in \eqref{eqn:AB}, and let $D,V,F$ be as in Lemma~\ref{lem:tangency}. Suppose further that $(\forall y \in D)\;f'(y) \neq 0$. Let $x \in D\times \R$. Then the following hold.
	\begin{enumerate}[label=(\alph*)]
		\item\label{c:gradientAB} If $x,P_Ax,P_BR_Ax \notin A \cap B$ are not colinear, then \eqref{Benoistprototype1} holds.
		\item\label{c:gradientBA} If $x,P_Bx,P_AR_Bx \notin A \cap B$ are not colinear, then \eqref{Benoistprototype2} holds.
	\end{enumerate}
\end{corollary}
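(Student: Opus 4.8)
The plan is to read the three expressions appearing in \eqref{Benoistprototype1} as the perpendicular bisectors of the triangle with vertices $x$, $R_Ax$, and $R_BR_Ax$, and to identify their common point as the circumcenter $C(x,R_Ax,R_BR_Ax)$. Since $f$ is differentiable with $f'(y)\neq 0$ on $D$, we have $\partial^0 f(y)=\{f'(y)\}$, so $0\notin\partial^0 f(y)$ and the implication \eqref{assumptionf0} holds vacuously; moreover $\partial V=\{\nabla V\}$ is single-valued and the hypotheses of Lemma~\ref{lem:tangency} and Theorem~\ref{thm:gradientdescent} are met. First I would record the three midpoint identities $P_Ax=\tfrac12(x+R_Ax)$, $P_BR_Ax=\tfrac12(R_Ax+R_BR_Ax)$, and $T_{A,B}x=\tfrac12(x+R_BR_Ax)$, which show that $P_Ax$, $P_BR_Ax$, and $T_{A,B}x$ are exactly the midpoints of the three sides of that triangle, so that the three perpendicular bisectors are $H(x,R_Ax)$, $H(R_Ax,R_BR_Ax)$, and $H(x,R_BR_Ax)$.

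Next I would invoke Theorem~\ref{thm:gradientdescent}. Part~\ref{PAH} gives $\nabla V(P_Ax)\in\{x-R_Ax\}^\perp$, so the ray $\{P_Ax-\mu\nabla V(P_Ax):\mu\in\R\}$ lies in $H(x,R_Ax)$; part~\ref{TABH} does the same for $T_{A,B}x$ and $H(x,R_BR_Ax)$; and part~\ref{PBH}, applied with base point $R_Ax$ (which lies in $D\times\R$ because $R_A(y,\rho)=(y,-\rho)$), does the same for $P_BR_Ax$ and $H(R_Ax,R_BR_Ax)$. Because we are in $\R^2$, each orthogonal complement $\{\cdot\}^\perp$ is one-dimensional, so as soon as the relevant gradient is nonzero the descent ray coincides with the entire perpendicular-bisector line.

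It then remains to confirm that the gradients do not vanish and that the circumcenter is well defined. Using $\nabla V(y,\rho)=(f(y)/f'(y),\rho)$, one checks that $\nabla V(P_Ax)=0$ iff $f(y)=0$ iff $P_Ax\in A\cap B$, and likewise $\nabla V(P_BR_Ax)=0$ iff $P_BR_Ax\in A\cap B$; both are excluded by hypothesis. Since $R_Ax-x=2(P_Ax-x)$ and $R_BR_Ax-x=2(P_BR_Ax-P_Ax)$, the triple $x,R_Ax,R_BR_Ax$ is colinear exactly when $x,P_Ax,P_BR_Ax$ is, so the non-colinearity hypothesis makes the triangle non-degenerate and $C:=C(x,R_Ax,R_BR_Ax)$ the unique common point of its three perpendicular bisectors. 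As $C$ lies on $H(x,R_Ax)$ and $H(R_Ax,R_BR_Ax)$, which we have identified with the nonzero-gradient descent lines through $P_Ax$ and $P_BR_Ax$, we recover $\mu_1,\mu_2$ with $C=P_Ax-\mu_1\nabla V(P_Ax)=P_BR_Ax-\mu_2\nabla V(P_BR_Ax)$; the third equality follows in the same way from part~\ref{TABH} once $\nabla V(T_{A,B}x)\neq 0$. This establishes \eqref{Benoistprototype1}, proving \ref{c:gradientAB}.

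The step I expect to be the main obstacle is the degenerate case $\nabla V(T_{A,B}x)=0$, equivalently $T_{A,B}x\in A\cap B$ (so that a single step already lands on a fixed point): then the third descent ray collapses to the point $T_{A,B}x$ and one must argue separately that $C=T_{A,B}x$, or simply exclude this case. For \ref{c:gradientBA} I would run the mirror argument on the triangle $x,R_Bx,R_AR_Bx$, whose side midpoints are $P_Bx$, $T_{B,A}x$, and $P_AR_Bx$, using parts~\ref{PBH}, \ref{TBAH}, and \ref{PAH} of Theorem~\ref{thm:gradientdescent}; here the third perpendicular bisector $H(R_Bx,R_AR_Bx)$ is just $A$, and the nonzero-gradient conditions are supplied by $P_Bx,P_AR_Bx\notin A\cap B$. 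Alternatively, one can transport \ref{c:gradientAB} across the axis via the identity $T_{B,A}=R_AT_{A,B}R_A$ of Proposition~\ref{prop:handy}\ref{p:Tequivalence}. The care needed in the mirror argument is ensuring that the base point $R_Bx$ lies in $D\times\R$, so that $\nabla V(P_AR_Bx)$ is indeed given by the Dao--Tam formula of \eqref{lem:F}.
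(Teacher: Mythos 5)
Your proposal is correct and follows essentially the same route as the paper's proof: identify the gradient lines through $P_Ax$, $P_BR_Ax$, and $T_{A,B}x$ with the perpendicular bisectors of the triangle $(x,R_Ax,R_BR_Ax)$ via Theorem~\ref{thm:gradientdescent} (with part~\ref{PBH} applied at the base point $R_Ax$, as you do), upgrade the inclusions to equalities of lines by the one-dimensionality argument available in $\R^2$, and intersect the three bisectors at the circumcenter, with part~\ref{c:gradientBA} handled by the mirrored triangle.

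The one step you leave open---the degenerate case $\nabla V(T_{A,B}x)=0$---cannot occur under the stated hypotheses; this is precisely the content of the paper's display \eqref{c:gradient1}, which it asserts without proof. Writing $x=(y,\rho)$ and $P_BR_Ax=(q,f(q))$, one computes
\begin{equation*}
T_{A,B}x=\tfrac12\left(x+R_BR_Ax\right)=(q,\,f(q)+\rho),
\qquad
\nabla V(T_{A,B}x)=\left(\frac{f(q)}{f'(q)},\;f(q)+\rho\right),
\end{equation*}
so $\nabla V(T_{A,B}x)=0$ forces $f(q)=0$ and then $\rho=0$, whence $P_BR_Ax=(q,0)\in A\cap B$, contradicting the hypothesis of \ref{c:gradientAB}. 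Hence no separate argument or extra exclusion is needed. Your other flagged point---that $R_Bx$ must lie in $D\times\R$ so that the Dao--Tam formula applies at $P_AR_Bx$ in the mirror argument---is a legitimate piece of care that the paper's ``similar to \ref{c:gradientAB}'' glosses over; it is implicitly required for \eqref{Benoistprototype2} to be well posed at all, since $V$ is only defined on $D\times\R$.
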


Recall the aforementioned results of Dizon, Hogan, and Lindstrom \cite{DHL2019} that guarantee quadratic convergence of CRM for many choices of $f$. Corollary~\ref{cor:gradient} may be seen as showing that the specific gradient descent method for $V$ that corresponds to CRM in $\R^2$ actually has quadratic rate of convergence for choices of $f$ covered by their results. Interestingly, connections with Newton--Raphson in $\R^2$ do not end there.

\subsection{Newton--Raphson method and subgradient descent on $f$ as gradient descent on $V$}

The following proposition shows that Newton--Raphson and subgradient projections method for $f:X \rightarrow \R$ may also be characterized as gradient descent on $V$ with step size $1$.

\begin{proposition}\label{prop:NewtonRaphson}Let $f:X \rightarrow \R$ be continuous, $A,B$ be as in \eqref{eqn:AB}, and $D,V,F$ be as in Lemma~\ref{lem:tangency}. Let $x=(y,0) \in D \times \R$ and $0\notin \partial^0f(y)$. Let $\Newt$ be the Newton--Raphson method. The following hold.
	\begin{enumerate}[label=(\roman*)]
		\item\label{p:NewtonRaphson} If $X = R$ and $f$ is differentiable, then 
		\begin{equation*}
			(\Newt(y),0) = x-\nabla V(x).
		\end{equation*}
		\item\label{p:subgradientdescent} Otherwise, let $y^* \in \partial^0f(y)$, and we have
		\begin{equation}\label{e:subgradientdescent}
			\left(y-\frac{f(y)}{\|y^*\|^2}y^*,0 \right) = x-x^*,\quad \text{for some}\;x^* \in \partial V(x).
		\end{equation}
	\end{enumerate}
\end{proposition}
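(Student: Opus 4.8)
The plan is to read off the (sub)gradient of $V$ at the point $x=(y,0)$ directly from the product structure established in Lemma~\ref{lem:tangency}, and then to exploit the fact that the second coordinate $\rho=0$ forces the second coordinate of any element of $\partial V(x)$ to vanish. Consequently, subtracting such a (sub)gradient from $x$ keeps the result inside $A=X\times\{0\}$ while acting on the first coordinate exactly as the claimed Newton--Raphson or subgradient-projection step. Both assertions are therefore bookkeeping consequences of the inclusion \eqref{lem:F} and the subdifferential formula \eqref{V:subdiff}; the only real content is to match the first coordinate against the stated update.

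First I would establish the more general statement \ref{p:subgradientdescent}. Fixing $y^*\in\partial^0 f(y)$ and using the hypothesis $0\notin\partial^0 f(y)$, the inclusion \eqref{lem:F} gives $\frac{f(y)}{\|y^*\|^2}y^*\in\partial F(y)$. Evaluating \eqref{V:subdiff} at $\rho=0$ then yields
$$x^*:=\left(\frac{f(y)}{\|y^*\|^2}y^*,\,0\right)\in\partial F(y)\times\{0\}=\partial V(y,0)=\partial V(x).$$
Subtracting, $x-x^*=(y,0)-\left(\frac{f(y)}{\|y^*\|^2}y^*,0\right)=\left(y-\frac{f(y)}{\|y^*\|^2}y^*,\,0\right)$, which is precisely \eqref{e:subgradientdescent}.

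To obtain \ref{p:NewtonRaphson}, I would specialize to $X=\R$ with $f$ differentiable, so that $\partial^0 f(y)=\{f'(y)\}$ and the condition $0\notin\partial^0 f(y)$ reads $f'(y)\neq0$. Here $\|y^*\|^2=(f'(y))^2$, so the scalar multiplier in \eqref{lem:F} collapses to $\frac{f(y)}{(f'(y))^2}f'(y)=\frac{f(y)}{f'(y)}$; thus $F$, and hence $V$, is differentiable at the point in question with $\nabla V(x)=(f(y)/f'(y),\,0)$ (cf. Example~\ref{ex:2lines}). Subtracting gives $x-\nabla V(x)=(y-f(y)/f'(y),\,0)=(\Newt(y),\,0)$, using the definition $\Newt(t)=t-f(t)/f'(t)$.

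There is essentially no structural obstacle here, as the result follows from the previously established form of $V$. The single point requiring mild care is the passage from an inclusion to an equality: for differentiable $f$ on $\R$ one must argue that the symmetric subdifferential reduces to the singleton $\{f'(y)\}$, so that $x^*$ is genuinely the gradient $\nabla V(x)$ rather than merely one element of $\partial V(x)$. This is exactly what upgrades the set-membership used in \ref{p:subgradientdescent} to the equality asserted in \ref{p:NewtonRaphson}.
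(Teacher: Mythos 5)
Your proposal is correct and follows essentially the same route as the paper's own proof: both parts are read off directly from \eqref{lem:F} together with the subdifferential formula \eqref{V:subdiff}/\eqref{lem:V}, with part~\ref{p:NewtonRaphson} obtained by collapsing the multiplier $\frac{f(y)}{\|y^*\|^2}y^*$ to $\frac{f(y)}{f'(y)}$ in the differentiable scalar case. Your added remark on upgrading the inclusion to an equality via $\partial^0 f(y)=\{f'(y)\}$ is a point the paper glosses over, but it does not change the argument.
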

\begin{proof}
	\ref{p:NewtonRaphson}: Simply notice that, by \eqref{lem:F} and \eqref{lem:V},
	\begin{equation*}
		(\Newt(y),0) = \left(y-\frac{f(y)}{f'(y)},0 \right) = \left(y-\nabla F(y),0 \right) = (y,0)-\left( \nabla F(y),0 \right) = x-\nabla V(x).
	\end{equation*}
	
	\ref{p:subgradientdescent}: Again by \eqref{lem:F} and \eqref{lem:V}, 
	\begin{equation*}
		x^* := \left(\frac{f(y)}{\|y^*\|^2}y^*,0 \right) \in \partial F(y) \times \{0\}=\partial V(y,0)=\partial V(x).
	\end{equation*}
	This choice of $x^*$ clearly satisfies the requirements of \eqref{e:subgradientdescent}, showing \ref{p:subgradientdescent}.
\end{proof}
For many choices of $f$, the explicit equivalence with Newton--Raphson method given by Proposition~\ref{prop:NewtonRaphson}\ref{p:NewtonRaphson} actually guarantees quadratic convergence of gradient descent on $V$ with step size $1$ for $x$ started in $A$. Altogether, we have shown that CRM on $\R^2$, Newton--Raphson on $\R$, and subgradient projection methods on $\R^n$ may all be characterized as gradient descent applied to Lyapunov functions constructed to describe the Douglas--Rachford method for many prototypical problems. More importantly, we have Theorem~\ref{thm:gradientdescent}, which relates the subgradients of $V$ to the perpendicular bisectors of the triangles that are the basis of CRM in Euclidean space more generally. 

\section{Spherical surrogates for Lyapunov Functions}\label{s:LT}

From now on, we assume $T,U,V$ together satisfy \ref{A1}. We are particularly interested in operators of the following form, which will be illustrated by Corollary~\ref{cor:TABinSL} and whose significance is made clear by Theorem~\ref{thm:quadratic}.
\begin{definition}
	Let $V$ be a smooth Lyapunov function with respect to $(\omega_1,\omega_2)$ on $U$ for the difference inclusion $x^+ \in Tx$. Let $\Omega_T,\Lambda,\psi:E \rightarrow E$ satisfy\footnote{How exactly one defines the \textit{otherwise} (colinear) case is of little practical importance for our analysis. By setting it to be $x^+$, one re-attempts a surrogate-minimizing step after the computation of one more update of $T$. By setting it to be $Tx^{+}$, one computes two more updates. One could choose either value, or something else.}
	\begin{align}\label{def:OmegaT}
		&\Omega_T: x \mapsto \begin{cases}
			C(x,2x^+ -x,\Lambda x), &\text{if}\;\; x,2x^+ -x,\Lambda x \text{\;are\;not\;colinear;}\\
			x^+ & \text{otherwise},
		\end{cases},\\
		&\psi x \in H \left(x, \Lambda x \right)\cap \aff\{x,2x^+-x,\Lambda x \},\quad \text{and}\label{d:psi}\\
		&x^+ +\R \nabla V(x^+)\subset H(x,2x^+ -x) \quad  \text{and} \quad \psi x+\R \nabla V(\psi x ) \subset H(\Lambda x ,x).\nonumber
	\end{align}
	Then we say $\Omega_T$ \emph{minimizes a circumcenter-defined spherical surrogate} for $V$, or $\Omega_T \in \mathbf{MCS}(V)$ for short.
\end{definition}

The following corollary is an immediate consequence of Theorem~\ref{thm:gradientdescent}.

\begin{corollary}\label{cor:TABinSL}
	Let \ref{A1} 
	hold \emph{and} $D,T_{A,B},T_{B,A}$ be as in Theorem~\ref{thm:gradientdescent}, with $U=D\times\R$, and $\mathcal{C}_{T_{A,B}}$ as defined in \eqref{def:circumcenteredDR}. Then
	\begin{align*}
		&\mathcal{C}_{T_{A,B}}\in \mathbf{MCS}(V)\;\;\text{with}\;\;\Lambda  =R_A\;\;\text{and}\;\;\psi =P_A,\\
		\text{and}\quad&\mathcal{C}_{T_{B,A}}\in \mathbf{MCS}(V)\;\;\text{with}\;\;\Lambda  =R_B\;\;\text{and}\;\;\psi  =P_B.
	\end{align*}
\end{corollary}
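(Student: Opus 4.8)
The plan is to verify, term by term, the three defining requirements of $\mathbf{MCS}(V)$ for $\Omega_T=\mathcal{C}_{T_{A,B}}$ with the stated choices $\Lambda=R_A$, $\psi=P_A$; the statement for $\mathcal{C}_{T_{B,A}}$ then follows by the identical argument with the roles of $A$ and $B$ interchanged. Since $V$ is a \emph{smooth} Lyapunov function, its subdifferential at any point is the singleton $\{\nabla V\}$, so each subdifferential statement of Theorem~\ref{thm:gradientdescent} may be read directly as a gradient statement. This is what lets me match that theorem against a definition phrased in terms of $\nabla V$.

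First I would pin down the functional form \eqref{def:OmegaT}. Writing $x^+=T_{A,B}x=\tfrac12 R_BR_Ax+\tfrac12 x$ gives the key averaging identity $2x^+-x=R_BR_Ax$, while $\Lambda x=R_Ax$ by choice. Hence $C(x,2x^+-x,\Lambda x)=C(x,R_BR_Ax,R_Ax)=C_Tx$, and the colinearity alternative returns $x^+=T_{A,B}x$; comparing with \eqref{def:circumcenteredDR} shows $\Omega_T=\mathcal{C}_{T_{A,B}}$ exactly. Next, for the locating condition \eqref{d:psi} with $\psi=P_A$: since $R_A=2P_A-\Id$, the midpoint satisfies $\tfrac{x+R_Ax}{2}=P_Ax$, so $P_Ax$ is precisely the midpoint defining $H(x,R_Ax)=H(x,\Lambda x)$ and therefore lies on that perpendicular bisector; being an affine combination of $x$ and $R_Ax$, it also lies in $\aff\{x,2x^+-x,\Lambda x\}$, giving $\psi x\in H(x,\Lambda x)\cap\aff\{x,2x^+-x,\Lambda x\}$.

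The two remaining gradient-containment requirements are exactly the content of Theorem~\ref{thm:gradientdescent}. Part~\ref{TABH}, read with $\partial V(T_{A,B}x)=\{\nabla V(x^+)\}$, yields $x^++\R\nabla V(x^+)\subset H(x,R_BR_Ax)=H(x,2x^+-x)$; part~\ref{PAH}, read with $\partial V(P_Ax)=\{\nabla V(P_Ax)\}$ and using that $\mu$ ranges over all of $\R$, yields $\psi x+\R\nabla V(\psi x)=P_Ax+\R\nabla V(P_Ax)\subset H(x,R_Ax)=H(\Lambda x,x)$, where the last equality is the symmetry $H(y,z)=H(z,y)$ of the perpendicular bisector. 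This exhausts the three conditions, so $\mathcal{C}_{T_{A,B}}\in\mathbf{MCS}(V)$. For $\mathcal{C}_{T_{B,A}}$ one repeats verbatim with $2x^+-x=R_AR_Bx$ and $\Lambda x=R_Bx$, $\psi x=P_Bx$, invoking parts~\ref{TBAH} and \ref{PBH} of Theorem~\ref{thm:gradientdescent} in place of \ref{TABH} and \ref{PAH}.

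I expect no substantive obstacle: the corollary is essentially a bookkeeping translation of Theorem~\ref{thm:gradientdescent} into the language of the $\mathbf{MCS}(V)$ definition, which is why it can be asserted as an ``immediate consequence.'' The only points demanding care are the averaging identity $2x^+-x=R_BR_Ax$, the midpoint identity $\tfrac12(x+R_Ax)=P_Ax$, the symmetry $H(y,z)=H(z,y)$, and the use of smoothness to collapse $\partial V$ to $\{\nabla V\}$ so that the existential and universal quantifiers over subgradients in Theorem~\ref{thm:gradientdescent} become the single-gradient inclusions demanded by $\mathbf{MCS}(V)$.
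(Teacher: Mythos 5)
Your proposal is correct and is essentially the paper's argument: the paper offers no written proof, asserting the corollary as an immediate consequence of Theorem~\ref{thm:gradientdescent}, and your verification (the averaging identity $2x^+-x=R_BR_Ax$, the midpoint identity $P_Ax=\tfrac12(x+R_Ax)$ placing $\psi x$ in $H(x,\Lambda x)\cap\aff\{x,2x^+-x,\Lambda x\}$, and smoothness collapsing $\partial V$ to $\{\nabla V\}$ so that parts \ref{TABH}--\ref{TBAH} yield the required containments) is precisely the bookkeeping that assertion leaves implicit.
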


Next, in Theorem~\ref{thm:quadratic}, we show that an operator in $\mathbf{MCS}(V)$ may be characterized as \textit{returning the minimizer of a spherical surrogate} for the Lyapunov function $V$. Figure~\ref{fig:quadratic} illustrates this for $C_T$, and for the new operator $L_T$ that we will introduce, when $T$ is the Douglas--Rachford operator. Before proving it, let us first formalize the notion.
\begin{definition}\label{def:MSS}
	Let $n,m \in \N$ and $\Theta_T:E \rightarrow E$. Further suppose that there exist maps $\sigma_1,\dots,\sigma_m,\Gamma_1,\dots,\Gamma_{n}:E \rightarrow E$ such that whenever $\aff \{x,\Gamma_1x, \dots, \Gamma_{n}x\}$ has dimension $n$, the function $Q: u \mapsto d(u,\Theta_T x)^2$ satisfies 
	\begin{equation*}
		(\forall j \in \{1,\dots,m\})\quad P_{\aff \{x,\Gamma_1 x,\dots,\Gamma_{n}x \}- x}(\nabla V(\sigma_jx)) \in {\rm span}\{(\nabla Q)(\sigma_j x) \}.\\
	\end{equation*}
	Then we say that $\Theta_T$ \textit{minimizes a $n$-dimensional spherical surrogate for $V$, fitted at $m$-points}, or $\Theta_T \in \mathbf{MSS}(V)_m^n$ for short.
\end{definition}
Now we will show that $\mathbf{MCS}(V) \subset \mathbf{MSS}(V)$.
\begin{theorem}[$\mathbf{MCS}(V) \subset \mathbf{MSS}(V)$]\label{thm:quadratic}
	Let $\Omega_T \in \mathbf{MCS}(V)$, and let $Q:u \mapsto d(u,\Omega_T x)^2$. Then whenever $x,2x^+-x,\Lambda x$ are not colinear, the following hold:
	\begin{enumerate}[label=(\roman*)]
		\item\label{thm:quadratic1} $P_{\aff \{x,2x^+-x,\Lambda x \}-x}(\nabla V(x^+)) \in {\rm span}\{(\nabla Q)(x^+) \}$; and
		\item\label{thm:quadratic2} $P_{\aff \{x,2x^+-x,\Lambda x \}-x}(\nabla V(\psi x)) \in {\rm span}\{(\nabla Q)(\psi x) \}$.
	\end{enumerate}
	Accordingly, $\mathbf{MCS}(V) \subset \mathbf{MSS}(V)_2^2$ with $\Gamma_1:x\mapsto 2x^+-x$, $\Gamma_2:x \mapsto \Lambda x$, $\sigma_1:x \mapsto x^+$ and $\sigma_2:x \mapsto \psi x$.
\end{theorem}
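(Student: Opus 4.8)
The plan is to compute $\nabla Q$ explicitly and then reduce both claims to an elementary orthogonality argument inside the two-dimensional affine hull $\aff\{x,2x^+-x,\Lambda x\}$. Since $Q(u)=\|u-\Omega_T x\|^2$, we have $\nabla Q(u)=2(u-\Omega_T x)$, so $\nabla Q(x^+)=2(x^+-\Omega_T x)$ and $\nabla Q(\psi x)=2(\psi x-\Omega_T x)$. Write $c:=\Omega_T x=C(x,2x^+-x,\Lambda x)$ and $W:=\aff\{x,2x^+-x,\Lambda x\}-x$. Non-colinearity of the three defining points makes $W$ a two-dimensional subspace, and I would first record the memberships $x^+-x,\ \Lambda x-x,\ c-x,\ \psi x-x\in W$: the first two are immediate, $c\in\aff\{\dots\}$ by definition of the circumcenter, $\psi x\in\aff\{\dots\}$ by \eqref{d:psi}, and $x^+$ is the midpoint of $x$ and $2x^+-x$, hence also in the affine hull. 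In particular $x^+-c$ and $\psi x-c$ both lie in $W$.

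For \ref{thm:quadratic1}, note that $H(x,2x^+-x)$ has midpoint $x^+$ and direction space $\{x-x^+\}^\perp$. The circumcenter property $c\in H(x,2x^+-x)$ gives $x^+-c\in\{x-x^+\}^\perp$, while the defining containment $x^++\R\nabla V(x^+)\subset H(x,2x^+-x)$ gives $\nabla V(x^+)\in\{x-x^+\}^\perp$. Since $x-x^+\in W$, orthogonal projection onto $W$ preserves orthogonality to $x-x^+$, so $P_W(\nabla V(x^+))$ also lies in $W\cap\{x-x^+\}^\perp$. This intersection is one-dimensional (as $W$ is two-dimensional and $x\neq x^+$), and $\nabla Q(x^+)=2(x^+-c)$ spans it whenever it is nonzero; hence $P_W(\nabla V(x^+))\in{\rm span}\{\nabla Q(x^+)\}$, which is \ref{thm:quadratic1}. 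Claim \ref{thm:quadratic2} is identical in structure, now using the side $\{x,\Lambda x\}$: both $c$ (by the circumcenter property) and $\psi x$ (by \eqref{d:psi}) lie on $H(x,\Lambda x)$, whose direction space is $\{\Lambda x-x\}^\perp$, so $\psi x-c\in\{\Lambda x-x\}^\perp$; meanwhile $\psi x+\R\nabla V(\psi x)\subset H(\Lambda x,x)$ gives $\nabla V(\psi x)\in\{\Lambda x-x\}^\perp$, and projecting onto $W$ and intersecting with $\{\Lambda x-x\}^\perp$ again lands everything on a common line spanned by $\nabla Q(\psi x)$.

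The concluding membership $\mathbf{MCS}(V)\subset\mathbf{MSS}(V)_2^2$ is then just a matter of reading off Definition~\ref{def:MSS} with $n=m=2$, $\Gamma_1 x=2x^+-x$, $\Gamma_2 x=\Lambda x$, $\sigma_1 x=x^+$, $\sigma_2 x=\psi x$, observing that the non-colinearity hypothesis is exactly the requirement that $\aff\{x,\Gamma_1 x,\Gamma_2 x\}$ have dimension two, so that \ref{thm:quadratic1} and \ref{thm:quadratic2} furnish the two required span-inclusions.

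I expect the only real care to lie in the degenerate sub-cases. The argument above writes $P_W(\nabla V)$ as a multiple of $\nabla Q$ by invoking that the common orthogonal-complement line is spanned by $\nabla Q$, which silently assumes $\nabla Q(x^+)\neq 0$ (respectively $\nabla Q(\psi x)\neq 0$); geometrically $\nabla Q(x^+)=0$ means the circumcenter coincides with the midpoint $x^+$, i.e. the triangle is right-angled at $\Lambda x$. I would handle these boundary cases separately---there one must check directly that $P_W(\nabla V(x^+))$ also vanishes---or note that they are excluded generically; likewise I would confirm $x\neq x^+$ and $x\neq\Lambda x$, which follow from non-colinearity, so that the relevant direction spaces really are proper hyperplanes and the intersections with $W$ are genuinely one-dimensional. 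Apart from this bookkeeping, the proof is a direct consequence of the circumcenter's equidistance property together with the two gradient-containment conditions built into membership in $\mathbf{MCS}(V)$.
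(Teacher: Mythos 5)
Your proof is correct and takes essentially the same approach as the paper's: both arguments reduce each claim to the fact that $P_W(\nabla V)$ and $\nabla Q$ at the fitted point lie on the common line $W\cap\{x-x^+\}^\perp$ (resp.\ $W\cap\{\Lambda x-x\}^\perp$), the paper differing only cosmetically in that it first translates so that $\Omega_T x=0$ and then splits $\nabla V$ into components along and orthogonal to the affine hull, rather than invoking your observation that projection onto $W$ preserves orthogonality to vectors of $W$. The right-angle degeneracy you flag---$\nabla Q(x^+)=0$ because the circumcenter coincides with the midpoint $x^+$---is in fact glossed over by the paper as well: its proof simply asserts $c\neq\Omega_T x=0$ without justification, so your explicit bookkeeping on that boundary case makes your write-up, if anything, the more careful of the two.
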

\begin{proof}
	Fix $x$. We can handle both cases \ref{thm:quadratic1} and \ref{thm:quadratic2} with the same argument: by letting $(a,b,c):=(x,2x^+-x,x^+)$ in the former case or $(a,b,c):=(x,\Lambda x,\psi x)$ in the latter. Let $x,2x^+-x,\Lambda x$ be not colinear. Then $\Omega_T x=C(x,2x^+-x,\Lambda x)$ exists and so satisfies
	\begin{align*}
		\Omega_T x &\in C_{ab} \quad \text{where}\quad C_{ab}:=H(a,b) \cap \aff\{x,2x^+-x,\Lambda x \}
	\end{align*}
	is an affine subspace of dimension $1$. Next, notice that
	\begin{align*}
		\aff\{x,2x^+-x,\Lambda x \} -x &\stackrel{(a)}{=} \aff\{x,2x^+-x,\Lambda x \} -x - (\Omega_Tx-x)\\
		&=\aff\{x,2x^+-x,\Lambda x \} - \Omega_Tx.
	\end{align*}
	Here (a) holds because $\Omega_Tx \in \aff\{x,2x^+-x,\Lambda x \}$, and so $\Omega_Tx-x \in \aff\{x,2x^+-x,\Lambda x \} -x$, where the latter is a subspace and therefore invariant under translation by any of its members (including, specifically $\Omega_Tx -x$). The problem thus simplifies to showing
	\begin{equation*}
		P_{\aff\{x,2x^+-x,\Lambda x \}-\Omega_Tx}\left(\nabla V(c) \right) \in {\rm span}\{\nabla Q(c) \}.
	\end{equation*}
	By a suitable translation and with no loss of generality, we may let $\Omega_T x = 0$, which simplifies our notation and allows the problem to be written as:
	\begin{equation*}
		P_{\aff\{x,2x^+-x,\Lambda x \}}\left(\nabla V(c) \right) \in {\rm span}\{\nabla Q(c) \}.
	\end{equation*}
	Moreover, because $\Omega_Tx=0$, we have that $C_{ab}$, $H(a,b)$, and $\aff\{x,2x^+-x,\Lambda x \}$ are all subspaces. Since $H(a,b)$ is a subspace, it is invariant under translation by any of its members; in particular we have all the equalities:
	\begin{equation}\label{quadratic1}
		H(a,b)=H(a,b)-c=H(a,b)-(a+b)/2=\{a-b\}^\perp.
	\end{equation}
	Furthermore, because $\Omega_T \in \mathbf{MCS}(V)$, we have $c+\R \nabla V(c) \subset H(a,b)$, and so
	\begin{equation}\label{quadratic2}
		\nabla V(c) \in H(a,b)-c = \{a-b\}^\perp,
	\end{equation}
	where the equality is from \eqref{quadratic1}. From \eqref{quadratic2}, we may write
	\begin{align*}
		\nabla V(c) &= u+v \quad \text{where}\\
		u \in \aff\{x,2x^+-x,\Lambda x \}\cap \{a-b\}^\perp=C_{ab} \quad &\text{and}\quad v \in  \aff\{x,2x^+-x,\Lambda x \}^\perp\cap \{a-b\}^\perp.
	\end{align*}
	Consequently, we have that
	\begin{align}\label{quadratic4}
		P_{\aff\{x,2x^+-x,\Lambda x \}}\left(\nabla V(c) \right) &= P_{\aff\{x,2x^+-x,\Lambda x \}}(u+v) = u \in C_{ab}.
	\end{align}
	Now since $\Omega_T x = 0$, we have $Q = \|\cdot \|^2$, and so $\nabla Q(w)=2w$ for all $w$. In particular, $\nabla Q(c)=2c$ where $c \neq \Omega_T x = 0$. From the definition of $\Omega_T$, it is clear that $c \in C_{ab}$. Altogether,
	\begin{equation*}
		P_{\aff\{x,2x^+-x,\Lambda x \}}\left(\nabla V(c) \right) \stackrel{(i)}{\in} C_{ab} \stackrel{(j)}{=} {\rm span}\{c \} \stackrel{(k)}{=} {\rm span}\{\nabla Q(c) \}.
	\end{equation*}
	Here (i) is true from \eqref{quadratic4}, (j) holds because $c\neq0$ and $c \in C_{ab}$ where $C_{ab}$ is a subspace of dimension 1, and (k) holds because $\nabla Q(c)=2c$. This shows the result.
\end{proof}

\begin{figure}
	\begin{center}
		\includegraphics[width=.29\textwidth]{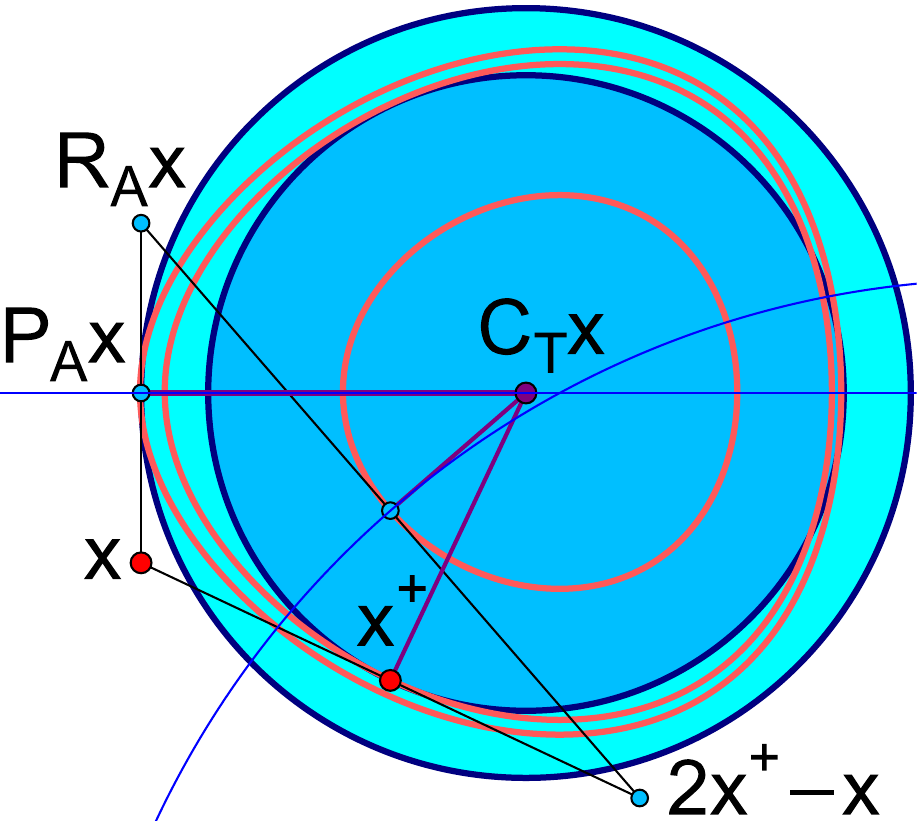}\;\includegraphics[width=.27\textwidth]{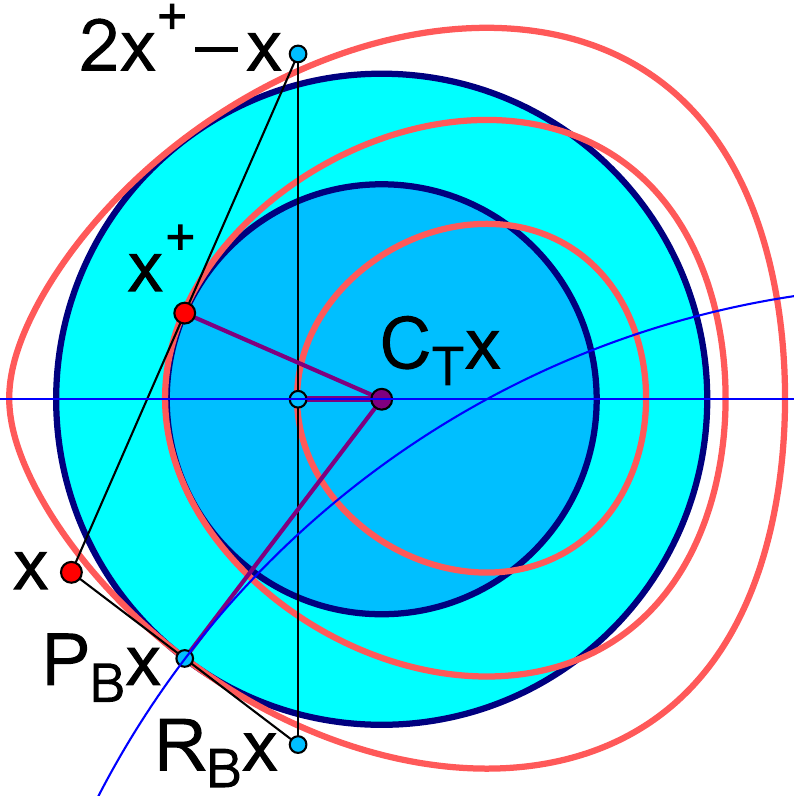}\;\includegraphics[width=.31\textwidth]{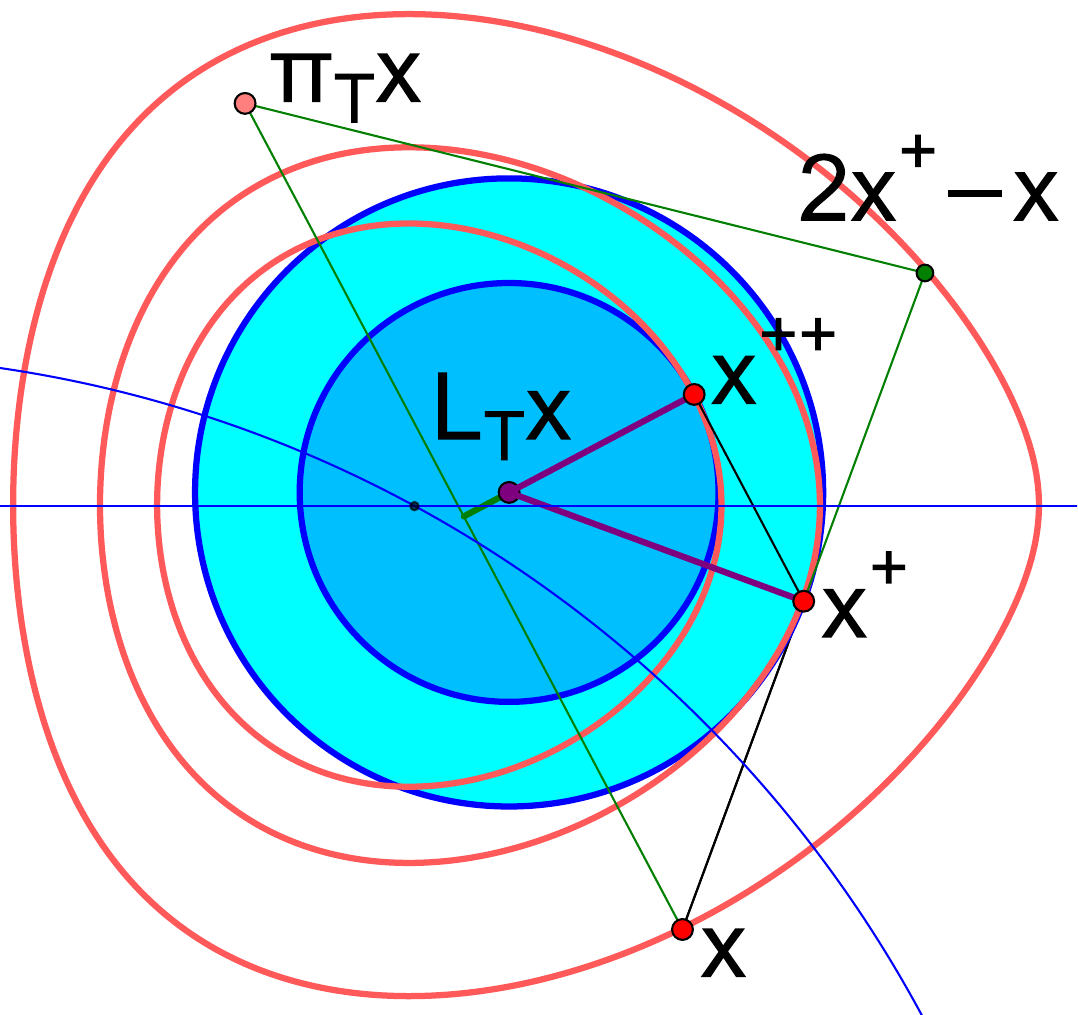}
	\end{center}
	\caption{When $V$ is Benoist's Lyapunov function, $C_{T_{A,B}},C_{T_{B,A}},L_{T_{A,B}} \in \mathbf{MCS}(V)$ (respectively from left to right) and so they minimize spherical surrogates as described in Theorem~\ref{thm:quadratic}.}\label{fig:quadratic}
\end{figure}

Now we will introduce a new operator in $\mathbf{MCS}(V)$ that has the additional property that it is defined for a general operator $T$ and \emph{does not depend} on substeps (e.g. reflections). This is highly advantageous. For example, this property allows the new operator to be used for the basis pursuit problem in Section~\ref{s:duality}, where $T_{\A,\B}$ is as in \eqref{LM}, the problem \eqref{eqn:operator_sum_problem} is more general than \eqref{feasibility_problem}, and the proximity operator $\A:=\prox_{cd_2}$ is no longer a projection.

\begin{definition}\label{def:LT}
	Denote $x^+\in Tx$ and $x^{++}\in Tx^+$. Let $L_T,\pi_T$ be as follows:
	\begin{align*}
		\pi_T:E \rightarrow E: x\mapsto&2(x^{++}-x^{+})+2P_{{\rm span}(x^{++}-x^{+})}(x^{+}-x)+x,\\
		&=2(x^{++}-x^{+})+2\left(\frac{\langle x^{+}-x,x^{++}-x^+ \rangle}{\|x^{++}-x^+\|^2}(x^{++}-x^+) \right)+x, \\
		\text{and}\quad L_T:E \rightarrow& E: x \mapsto \begin{cases}
			C(x,2x^+ -x,\pi_T x), &\text{if}\;\; x,2x^+ -x,\pi_T x \text{\;\;are\;not\;colinear;}\\
			x^{+} & \text{otherwise}
		\end{cases}
	\end{align*}
	The construction of the operator $L_T$ is principally motivated by minimizing the spherical surrogate of a Lyapunov function for $T$.
\end{definition}
One step of $L_{T_{B,A}}$ is shown for Example~\ref{ex:2lines} in Figure~\ref{fig:lines_and_quadratic} (left), together with reflection substeps. In Figure~\ref{fig:lines_and_quadratic} (right), where $B$ is a circle and $A$ a line, we omit the reflections in order to highlight that the construction of $L_T$ for a general operator $T$ depends only on $(x,x^+,x^{++})$. Now we have the main result about $L_T$.
\begin{theorem}\label{thm:LT}
	Let \ref{A1} 
	hold. Then $L_T \in \mathbf{MCS}(V)$ with $\Lambda = \pi_T$ and $\psi = T^2$.
\end{theorem}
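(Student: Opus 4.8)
The plan is to verify directly that $L_T$ satisfies the three defining relations of $\mathbf{MCS}(V)$ for the explicit witnesses $\Lambda := \pi_T$ and $\psi : x \mapsto x^{++}$. With $\Lambda = \pi_T$ the required branching form of $\Omega_T$ is literally the definition of $L_T$, so the first relation holds by inspection. The first gradient relation $x^+ + \R\nabla V(x^+) \subset H(x, 2x^+ - x)$ is almost as quick: the midpoint of $x$ and $2x^+-x$ is $x^+$, whence $H(x,2x^+-x) = x^+ + \{x-x^+\}^\perp$, and assumption \ref{A1} is precisely $\langle \nabla V(x^+), x-x^+\rangle = 0$, i.e. $\nabla V(x^+) \in \{x-x^+\}^\perp$.

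The geometric engine of the proof is a single identity that I would isolate first. Writing $w := x^{++}-x^+$, the definition of $\pi_T$ gives $\pi_T x - x = 2w + 2P_{{\rm span}(w)}(x^+-x)$, and since both summands lie in ${\rm span}(w)$, we conclude $x - \pi_T x \in {\rm span}(w)$. This is exactly what makes $\pi_T$ serve as a surrogate reflection: it forces the bisector direction $\{x-\pi_T x\}^\perp$ to coincide with $w^\perp$, the subspace on which \ref{A1} applied at $x^+$ provides control of $\nabla V(x^{++})$.

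With that identity established, I would verify the two conditions attached to $\psi x = x^{++}$. For the membership $x^{++} \in H(x,\pi_T x) \cap \aff\{x,2x^+-x,\pi_T x\}$: in the generic (non-colinear) case the direction space of the affine hull is ${\rm span}\{x^+-x, w\}$, and $x^{++}-x = (x^+-x) + w$ lies in it, giving affine membership; and computing $x^{++} - \tfrac12(x+\pi_T x) = P_{{\rm span}(w)^\perp}(x^+-x)$ shows this displacement is orthogonal to $w$, hence to $x-\pi_T x$, placing $x^{++}$ on the perpendicular bisector. For the final relation $x^{++} + \R\nabla V(x^{++}) \subset H(\pi_T x, x)$, I would apply \ref{A1} at $x^+$ with successor $x^{++} \in Tx^+$ to obtain $\langle \nabla V(x^{++}), x^+ - x^{++}\rangle = 0$, i.e. $\nabla V(x^{++}) \perp w$; combined with $x-\pi_T x \in {\rm span}(w)$ this yields $\nabla V(x^{++}) \in \{x-\pi_T x\}^\perp$, and together with $x^{++} \in H(\pi_T x, x)$ the inclusion follows.

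I expect the main obstacle to be conceptual rather than computational: recognizing that the correct witness is $\psi x = x^{++}$. The midpoint of $x$ and $\pi_T x$ lies on the bisector and in the affine hull automatically, but nothing is known about $\nabla V$ there; it is only at $x^{++}$ that \ref{A1}, applied at the shifted point $x^+$, can be invoked to control the gradient. Once the parallelism $x-\pi_T x \parallel w$ is recognized as the design principle behind $\pi_T$, the remaining verifications reduce to the same orthogonal-projection bookkeeping used in the proof of Theorem~\ref{thm:quadratic}.
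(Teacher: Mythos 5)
Your proposal is correct and follows essentially the same route as the paper's proof: the same witnesses $\Lambda=\pi_T$ and $\psi x=x^{++}$, the same key identity $\pi_Tx-x\in{\rm span}(x^{++}-x^+)$, the same midpoint computation $x^{++}-\tfrac12(x+\pi_Tx)=P_{{\rm span}(x^{++}-x^+)^\perp}(x^+-x)$, and \ref{A1} invoked at both $x$ and $x^+$. The only cosmetic difference is packaging: the paper proves the bisector inclusion $H(2x^{++}-x^+,x^+)\subset H(\pi_Tx,x)$ and feeds the \ref{A1} consequence through it, whereas you split the same content into ``$x^{++}$ lies on $H(\pi_Tx,x)$'' plus ``$\nabla V(x^{++})$ lies in its direction space'' (and you also explicitly verify the affine-hull membership required by \eqref{d:psi}, which the paper leaves implicit).
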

\begin{proof}
	Let, $\psi x=x^{++}$ and $\Lambda x=\pi_T x$ and let $x,2x^+-x,\pi_Tx$ be not colinear. Then we need only show that
	\begin{subequations}
		\begin{align}
			&x^+ -\R\nabla V(x^+) \subset H(x,2x^{+}-x),\label{tLtfirst} \\
			\text{and}\quad &x^{++} -\R\nabla V(x^{++}) \subset H(x,\pi_T x)\label{tLtsecond}
		\end{align}
	\end{subequations}
	The first inclusion \eqref{tLtfirst} is a straightforward consequence of \ref{A1}, and so also is
	\begin{equation*}
		x^{++} - \R\nabla(x^{++}) \subset H(2x^{++}-x^+,x^+).
	\end{equation*}
	Thus we may show \eqref{tLtsecond} by showing
	\begin{equation}\label{LT0}
		H(2x^{++}-x^+,x^+) \subset H(\pi_Tx,x),
	\end{equation}
	which is what we now do. Because $\pi_Tx-x \in {\rm span}(x^{++}-x^+)$, we have that
	\begin{equation}\label{LT1}
		\{x^{++}-x^+\}^\perp {\subset} \{\pi_Tx-x \}^\perp.
	\end{equation}
	Now, for simplicity, set
	\begin{equation}\label{d:zy}
		z:=P_{{\rm span}(x^{++}-x^{+})}(x^{+}-x) \quad \text{and}\quad y := (x^+-x)-z \in \{x^{++}-x^+ \}^\perp.
	\end{equation}
	Then we have that 
	\begin{subequations}\label{LT3}
		\begin{align}
			2\left(\frac{\pi_T x+x}{2}- x^{++}\right) &=\pi_Tx+x-2x^{++}\nonumber \\
			&= \left( 2(x^{++}-x^+ )+2z+x \right)+x-2x^{++} \label{LT3a}\\
			&=-2x^+ +2z+2x  \nonumber \\
			&=-2(x+z+y) +2z+2x  \label{LT3b}\\
			&=-2y \in \{x^{++}-x^+ \}^\perp. \label{LT3c}
		\end{align}
	\end{subequations}
	Here \eqref{LT3a} uses the definition of $\pi_T$ together with the simplified notation from \eqref{d:zy}, \eqref{LT3b} substitutes for $x^+$ using \eqref{d:zy}, and the inclusion in \eqref{LT3c} is true from the definition of $y$ in \eqref{d:zy}. Altogether, we have
	\begin{subequations}
		\begin{align}
			\{x^{++}-x^+ \}^\perp - \frac{\pi_T x+x}{2}+ x^{++} &=\{x^{++}-x^+ \}^\perp  \label{LT4a}\\
			&\subset \{\pi_T x-x \}^\perp \label{LT4b}\\
			\text{and\; so}\quad \{x^{++}-x^+ \}^\perp + x^{++} &\subset \{\pi_T x-x \}^\perp +\frac{\pi_T x+x}{2}.\label{LT4c}
		\end{align}
	\end{subequations}
	Here \eqref{LT4a} applies \eqref{LT3}, \eqref{LT4b} uses \eqref{LT1}, and \eqref{LT4c} shows \eqref{LT0}, completing the result.
\end{proof}

In view of Theorem~\ref{thm:quadratic} and Theorem~\ref{thm:LT}, $L_T \in \mathbf{MSS}(V)_2^2$ with $\Gamma_1,\sigma_{1}:x \mapsto x^+$ and $\Gamma_2,\sigma_2:x \mapsto x^{++}$.

\subsection{Additional properties of $L_T$}

One of DR's advantageous qualities is thought to be that it often searches in a subspace of reduced dimension; for example, it solves the feasibility problem of two lines in $E$ by searching within a subspace of dimension $2$. The following proposition shows that $L_T$ maps spaces of reduced dimension into themselves whenever $T$ does.
\begin{proposition}\label{prop:dimension}
	The following hold.
	\begin{enumerate}[label=(\roman*)]
		\item\label{reduceddimensiona} $L_Tx \in \begin{cases}
			{\rm aff}\{x,x^+,x^{++}\} & \text{if} \;\;x,x^+,x^{++}\;\;\text{are\;not\;colinear;}\\
			Tx^{++} &\text{otherwise.}\end{cases}$
		\item\label{reduceddimensionb} If $U$ is an affine subspace and $T(U) \subset U$, then $L_T (U) \subset U$.
	\end{enumerate}
\end{proposition}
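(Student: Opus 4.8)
The plan is to establish \ref{reduceddimensiona} directly and then read off \ref{reduceddimensionb} in one line. The engine behind \ref{reduceddimensiona} is that both auxiliary points used to form the circumcenter already lie in $\aff\{x,x^+,x^{++}\}$, so the entire circumcenter construction stays inside that affine hull. First I would check the two memberships. The point $2x^+-x=-x+2x^+$ is an affine combination of $x$ and $x^+$ (coefficients $-1,2,0$ summing to $1$), hence $2x^+-x\in\aff\{x,x^+,x^{++}\}$. For $\pi_Tx$, I would read straight from Definition~\ref{def:LT} that
\[
\pi_Tx-x=2(x^{++}-x^+)+2P_{{\rm span}(x^{++}-x^+)}(x^+-x)\in{\rm span}(x^{++}-x^+),
\]
because both summands lie in ${\rm span}(x^{++}-x^+)$. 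Since $x^{++}-x^+=(x^{++}-x)-(x^+-x)$ belongs to the direction space $\aff\{x,x^+,x^{++}\}-x$, this places $\pi_Tx\in\aff\{x,x^+,x^{++}\}$ as well. Therefore $\aff\{x,2x^+-x,\pi_Tx\}\subseteq\aff\{x,x^+,x^{++}\}$, and as the circumcenter of three points always lies in their affine hull, the value $C(x,2x^+-x,\pi_Tx)$---whenever it is the one selected---lies in $\aff\{x,x^+,x^{++}\}$.

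It remains to align the colinearity test in Definition~\ref{def:LT} (on $x,2x^+-x,\pi_Tx$) with the one in the statement (on $x,x^+,x^{++}$). I would compare directions: setting $w:=x^{++}-x^+$ and $u:=x^+-x$, the triple $x,x^+,x^{++}$ is colinear iff $w\parallel u$, whereas $x,2x^+-x,\pi_Tx$ is colinear iff $\pi_Tx-x\parallel u$. The projection formula gives $\pi_Tx-x=2(1+c)w$ with $c=\langle u,w\rangle/\|w\|^2$, from which the two tests coincide away from the single degenerate coincidence $\pi_Tx=x$ (the case $1+c=0$). Consequently, when $x,x^+,x^{++}$ are not colinear, $L_Tx$ is either the circumcenter, already shown to lie in $\aff\{x,x^+,x^{++}\}$, or---in the degenerate event $\pi_Tx=x$, where the defining triple collapses to two points---a midpoint/fallback value that still lies in that hull; either way the first case of \ref{reduceddimensiona} holds. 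When $x,x^+,x^{++}$ are colinear, the same direction computation forces $x,2x^+-x,\pi_Tx$ to be colinear too, so $L_T$ takes its fallback value in $Tx^{++}$, giving the second case.

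Part \ref{reduceddimensionb} is then immediate: for $x\in U$ we get $x^+\in Tx\subseteq T(U)\subseteq U$, and then $x^{++}\in Tx^+\subseteq T(U)\subseteq U$; since $U$ is affine it contains $\aff\{x,x^+,x^{++}\}$, and since $x^{++}\in U$ it contains $Tx^{++}$ as well, so both alternatives of \ref{reduceddimensiona} land $L_Tx$ in $U$. I expect the only real obstacle to be the bookkeeping in the second paragraph: matching the two differently-phrased colinearity conditions and correctly disposing of the genuinely degenerate configurations (e.g. $x^{++}=x^+$, $x^+=x$, or $\pi_Tx=x$), where the circumcenter degenerates to a midpoint or a single point and one must confirm that the value $L_T$ actually returns is the one consistent with the claimed case. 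Everything else is routine affine algebra.
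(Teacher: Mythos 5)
Your core argument coincides with the paper's: the paper's entire proof of Proposition~\ref{prop:dimension} is one sentence (part \ref{reduceddimensiona} ``follows from the definition of $L_T$,'' and \ref{reduceddimensionb} from \ref{reduceddimensiona}), and your first paragraph supplies exactly the unpacking that sentence leaves implicit --- $2x^+-x$ is an affine combination of $x$ and $x^+$; $\pi_Tx-x\in{\rm span}(x^{++}-x^+)$, so $\pi_Tx\in\aff\{x,x^+,x^{++}\}$; a circumcenter lies in the affine hull of its three defining points; and \ref{reduceddimensionb} follows because $x,x^+,x^{++}\in U$ with $U$ affine and $T(U)\subset U$. All of that is correct, and \ref{reduceddimensionb} in particular is safe under any reading of the definition, since every candidate value of $L_Tx$ (a circumcenter of points of $U$, the point $x^+$, or a point of $Tx^{++}$) lies in $U$.

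The genuine problem is in your second paragraph, where the two fallback cases are resolved using incompatible conventions. Definition~\ref{def:LT} as typeset returns $x^{+}$ in the colinear case. Under that convention your degenerate case is handled correctly (when $x,x^+,x^{++}$ are not colinear but $\pi_Tx=x$, the returned value $x^+$ does lie in the hull), but then your sentence ``$L_T$ takes its fallback value in $Tx^{++}$'' has no basis in the definition, and indeed $x^+\in Tx^{++}$ is false in general. If instead the fallback is taken in $Tx^{++}$ (evidently what the proposition intends, and permitted by the footnote accompanying \eqref{def:OmegaT}), the colinear case becomes immediate, but your degenerate case breaks: a point of $Tx^{++}$ need not lie in $\aff\{x,x^+,x^{++}\}$, and the configuration $\pi_Tx=x$ --- equivalently $\langle x^{++}-x,\,x^{++}-x^+\rangle=0$, a right angle at $x^{++}$ --- genuinely occurs for non-colinear triples. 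So no single convention validates both halves of your case analysis, and this cannot be patched by more care: it is an inconsistency between Definition~\ref{def:LT} and Proposition~\ref{prop:dimension} in the paper itself, which the paper's one-line proof never confronts. Your direction computation (that $\pi_Tx-x$ is a scalar multiple of $x^{++}-x^+$, the scalar vanishing exactly in the right-angle configuration) is precisely the calculation that exposes this; the honest conclusion is that \ref{reduceddimensiona} holds as stated except possibly at that degenerate configuration, where its two clauses cannot simultaneously be satisfied by one choice of fallback, while \ref{reduceddimensionb} holds unconditionally.
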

\begin{proof}
	Here \ref{reduceddimensiona} follows from the definition of $L_T$, and \ref{reduceddimensionb} follows from \ref{reduceddimensiona}.
\end{proof}
As Figure~\ref{fig:lines_and_quadratic} (left) would suggest, Proposition~\ref{prop:dimension} makes it straightforward to prove the following.
\begin{proposition}\label{prop:twolines}
	Let $A,B$ be lines in $E$. Then for any $x \in E$, $L_{T_{A,B}}x \in \Fix T_{A,B}$.
\end{proposition}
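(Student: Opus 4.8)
The plan is to reduce the problem to a two-dimensional spiral and then recognize the limiting fixed point as the circumcenter that $L_{T_{A,B}}$ computes. First I would record the geometry of $T_{A,B}$ for two lines: it is affine, so I may assume $A\cap B\neq\emptyset$ (otherwise $\Fix T_{A,B}=\emptyset$ and there is nothing to prove). Translating so the lines meet at the origin with unit directions $a,b$, set $P:=\mathrm{span}\{a,b\}$. One checks that $R_BR_A$ is the identity on $P^\perp$ and a rotation by $2\theta$ on $P$ (where $\theta$ is the angle between the lines), so that $T_{A,B}$ is the identity on $\Fix T_{A,B}=P^\perp$ and equals the spiral similarity $T_0=\cos\theta\,R_\theta$ on $P$. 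In particular the $P^\perp$-component of an iterate is preserved, so $\bar x:=P_{\Fix T_{A,B}}(x)$ is a single point of $\Fix T_{A,B}$ common to $x$, $x^+$, $x^{++}$, and all of $x,\ 2x^+-x,\ \pi_Tx$ lie in the affine plane $\bar x+P$. By Proposition~\ref{prop:dimension}\ref{reduceddimensiona}, $L_{T_{A,B}}x$ lies in $\aff\{x,x^+,x^{++}\}\subseteq\bar x+P$ as well.

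Next I would invoke the explicit Lyapunov function. Example~\ref{ex:2lines} supplies the quadratic $V(w)=\tfrac12\,d(w,\Fix T_{A,B})^2$, for which a direct computation with $T_0=\cos\theta\,R_\theta$ and $u$ denoting the $P$-component of $x$ gives $\langle\nabla V(x^+),x-x^+\rangle=\langle T_0u,\,u-T_0u\rangle=\cos^2\theta\|u\|^2-\cos^2\theta\|u\|^2=0$; that is, \ref{A1} holds. Hence Theorem~\ref{thm:LT} applies, so $L_{T_{A,B}}\in\mathbf{MCS}(V)$, which yields the two containments $x^++\R\nabla V(x^+)\subseteq H(x,2x^+-x)$ and $x^{++}+\R\nabla V(x^{++})\subseteq H(x,\pi_Tx)$. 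Because $V$ is this centered quadratic, $\nabla V(x^+)=x^+-\bar x$ and $\nabla V(x^{++})=x^{++}-\bar x$, so each of these two lines passes through $\bar x$ (at parameter $-1$). Therefore $\bar x$ lies on both perpendicular bisectors $H(x,2x^+-x)$ and $H(x,\pi_Tx)$.

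To conclude, lying on these two perpendicular bisectors means $\bar x$ is equidistant from $x$, $2x^+-x$, and $\pi_Tx$; and when these three points are not colinear they affinely span the plane $\bar x+P$, which contains $\bar x$. By the defining property of the circumcenter, $\bar x=C(x,2x^+-x,\pi_Tx)=L_{T_{A,B}}x\in\Fix T_{A,B}$. In the remaining (colinear) case the definition of $L_T$ gives $L_{T_{A,B}}x=x^+$; here the spiral structure forces $u=0$ (when $0<\theta<\pi/2$) or $T_0u=0$ (when $\theta=\pi/2$), so that $x^+=\bar x\in\Fix T_{A,B}$, and the claim again holds.

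The main obstacle, and the only place genuine care is needed, is the reduction carried out in the first paragraph: one must pin down $\Fix T_{A,B}$ and the invariant orthogonal splitting $E=P\oplus P^\perp$ in a general Euclidean space (where the lines might be parallel or skew, which is exactly why the hypothesis $A\cap B\neq\emptyset$ is used), verify \ref{A1} for the resulting centered quadratic $V$, and dispatch the degenerate colinear configurations where the circumcenter is undefined. Once this geometry is established, everything downstream is a short consequence of Proposition~\ref{prop:dimension} and Theorem~\ref{thm:LT}, which is why the statement is expected to follow so directly.
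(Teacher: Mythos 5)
Your proof is correct and follows essentially the same route as the paper's: reduce to the two-dimensional invariant structure, invoke the spherical Lyapunov function of Example~\ref{ex:2lines} together with Theorem~\ref{thm:LT}, and conclude that the circumcenter must coincide with the center of that spherical function, i.e.\ a point of $\Fix T_{A,B}$; you simply make explicit what the paper leaves implicit (the splitting $E=P\oplus P^\perp$, the verification of \ref{A1}, and the colinear/perpendicular cases). The only quibble is your parenthetical that $A\cap B=\emptyset$ leaves ``nothing to prove'': in that case $\Fix T_{A,B}=\emptyset$, so the stated conclusion would be false rather than vacuous---though the paper's own proof makes the same tacit assumption that the lines meet.
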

\begin{proof}
	For this problem, the Douglas--Rachford sequence converges in a subspace $U$ of reduced dimension $2$ and is equivalent to the problem for two lines in $U$. Therefore, by a suitable translation and without loss of generality, we may reduce to considering the problem in $\R^2$ from Example~\ref{ex:2lines}. If the two lines are perpendicular, $x^+ \in \Fix T_{A,B}$ and so $L_T x = x^{+} \in \Fix T_{A,B}$. If the two lines are not perpendicular, the result follows from Theorem~\ref{thm:LT} and the fact that the Lyapunov function on the subspace of reduced dimension $2$ is simply the spherical function in Example~\ref{ex:2lines} whose gradient descent trajectories all intersect only in $\Fix T_{A,B}$.
\end{proof}
Another characterization of Proposition~\ref{prop:twolines} is that, for two lines, the spherical surrogate $Q$ constructed by $L_{T_{A,B}}$, as described in Theorem~\ref{thm:quadratic}, is equal (up to rescaling) to the Lyapunov function $V$ for the Douglas--Rachford operator. Proposition~\ref{prop:twolines} highlights another difference between $L_T$ and $C_T$, because CRM may not converge finitely for this same problem \cite[Corollary~2.11]{circumcentering}. Other known results in the literature may also be easily proven via this explicit connection with Lyapunov functions, including results about CRM (e.g. \cite[Lemma~2]{behling2019convex}). 

Of course, it should be noted that CRM sometimes also converges in lower dimensional subspaces, as in Figure~\ref{fig:Lyapunovcircle} where CRM converges within the set $B$. This invariance was exploited in  \cite{DHL2019} and \cite{behling2019convex}, as described in Section~\ref{ss:circumcentering_introduction}. By contrast, $L_T$ does not converge within $B$, which highlights another difference between the methods.

However, the most important advantage of $L_T$ is that it does not depend on the substeps involved in computing $Tx$ from $x$ (e.g. reflections). This makes it a potential candidate for algorithms that \textit{show signs of spiraling} admitted by \emph{any} operator $T$, wherefore one might \textit{suspect} that the Lyapunov function satisfies the spiraling condition \ref{A1}
. The inclusion $C_T \in \mathbf{MCS}(V)$ from Corollary~\ref{cor:TABinSL} uses additional assumptions on the structure of $T_{A,B}$ that may not be satisfied for the more general operator in \eqref{LM}. In fact, in Section~\ref{s:duality}, we will actually show that CRM's dependence on the subproblems renders it useless for the basis pursuit problem, even though the iterates generated by $T_{A,B}$ \textit{show signs of spiraling}. On the other hand, $L_T \in \mathbf{MCS}(V)$ \emph{whenever} \ref{A1} 
is satisfied, and $L_T$ shows very promising performance for the basis pursuit problem.

\section{Primal/Dual Implementation}\label{s:duality}

In Section~\ref{s:LT} we introduced the operator $L_T$, whose inclusion in $\mathbf{MCS}(V)$ we showed with very few assumptions about the specific problem and operator structure. In this section, we describe how one may use a method from $\mathbf{MSS}(V)$ for the \textit{general} optimization problem \eqref{objective} by exploiting a duality relationship and demonstrating with $L_T$. The basic strategy is to reconstruct the spiraling dual iterates from their primal counterparts, and then to apply a surrogate-minimizing step. One then obtains a multiplier update candidate from the shadow of the minimizer for the surrogate, propagates this update back to the primal variables insofar as is practical, and then can compare this candidate against a regular update before returning to primal iteration.

We illustrate, in particular, with ADMM, which solves the augmented Lagrangian system associated with \eqref{objective} where $E=\R^n$ and $Y=\R^m$ via the iterated process
\begin{subequations}\label{ADMM}
	\begin{alignat}{1}
		x_{k+1} &\in \underset{x \in \R^n}{\argmin} \left \{f(x)+g(z_k)+ \langle \lambda_k,Mx-z_k \rangle + \frac{c}{2}\|Mx-z_k\|^2  \right \}\label{ADMMx} \\
		z_{k+1} &\in \underset{z \in \R^m}{\argmin} \left \{f(x_{k+1})+g(z)+ \langle \lambda_k,Mx_{k+1}-z \rangle + \frac{c}{2}\|Mx_{k+1}-z\|^2 \right \}\label{ADMMz} \\
		\lambda_{k+1}&=\lambda_k+c(Mx_{k+1}-z_{k+1}).\label{ADMMlambda}
	\end{alignat}
\end{subequations}
When $f,g$ are convex, ADMM is dual to DR for solving the associated problem:
\begin{align*}
	\underset{\lambda \in \R^m}{\rm minimize}\quad d_1(\lambda)+d_2(\lambda) \quad {\rm where}\quad d_1:=f^*\circ (-M^T)\quad \text{and}\quad d_2:=g^*.
\end{align*}
Here $f^*,g^*$ denote the Fenchel--Moreau conjugates of $f$ and $g$. For brevity, we state only how to recover the dual updates from the primal ones; for a detailed explanation, we refer the reader to the works of Eckstein and Yao \cite{eckstein2012augmented,eckstein2015understanding}, whose notation we closely follow, and also to Gabay's early book chapter \cite{Gabay}, and to the references in \cite{LSsurvey}. For strong duality and attainment conditions, see, for example, \cite[Theorem 3.3.5]{BL}. For a broader introduction to Langrangian duality, see, for example, \cite{RW98,bertsekas2009convex}. The dual (DR) updates $(y_k)_{k \in \N}$ may be computed from the primal (ADMM) thusly:
\begin{equation*}
	\begin{tabular}{c|c}
		primal & dual \\
		{$\!\begin{aligned}
				cMx_{k+1} &= \prox_{cd_1}(R_{cd_2}y_k)-R_{cd_2}(y_k) \\  
				cz_k &= y_k-\prox_{cd_2}y_k \\
				\lambda_k&=\prox_{cd_2}y_k \end{aligned}$}& {$\!\begin{aligned}
				y_{k}&= \lambda_k+cz_k \\  
				R_{cd_2}y_k&=\lambda_k-cz_k\\
				R_{cd_1}R_{cd_2}y_k&=\lambda_k-cz_k+2cMx_{k+1} \end{aligned}$}
	\end{tabular}
\end{equation*}
Here the reflected resolvents 
\begin{equation*}
	R_{cd_j} = 2J_{c\partial d_j}-\Id = 2\prox_{cd_j}-\Id \quad (j=1,2),
\end{equation*}
are the reflected proximity operators for $d_1,d_2$ \eqref{def:prox}. They are denoted by $N_{cd_1},N_{cd_2}$ in \cite{eckstein2012augmented,eckstein2015understanding}. The sequence of multipliers for ADMM corresponds to what is frequently called the ``shadow'' sequence for DR: $(\lambda_k)_{k \in \N} = (\prox_{cd_2}y_k)_{k \in \N}$. The difference of subsequent iterates thereof, $\|\lambda_{k+1}-\lambda_k\|$, is $\|P_Ax-P_Ax^+\|$ in Figure~\ref{fig:ellipse2}(right) for $d_2 = \iota_{A}$ where $A$ is the line in Figure~\ref{fig:ellipse2}(left). For feasibility problems \eqref{feasibility_problem}, the visible shadow oscillations have been consistently associated with \textit{showing signs of spiraling} observed in Figure~\ref{fig:ellipse2}(left). This suggests that primal problems eliciting such multiplier update oscillations---whereby we \textit{suspect} that the dual sequence $(y_k)_{k \in \N} \subset \R^m$ may be spiraling in the sense of \ref{A1}---are natural candidates for primal/dual $\mathbf{MSS}(V)$ methods. For example, one may consider applying
\begin{align}
	\mathbf{y}_{L_T}\leftarrow L_{T_{\partial d_2, \partial d_1}}(y_k) &=\begin{cases}
		C(y_k,2y_{k+1}-y_k,\pi_{T_{\partial d_2, \partial d_1}} y_k)&\\
		\text{if}\;y_k,y_{k+1},y_{k+2} \;   \text{are\;not\;colinear}.
	\end{cases};\label{LTdual}\\
	\text{or}\quad\mathbf{y}_{C_T}\leftarrow \mathcal{C}_{T_{\partial d_2, \partial d_1}}(y_k) &= \begin{cases}
		C(y_k,R_{cd_2}y_k,R_{cd_1}R_{cd_2}y_k) & \\ \text{if}\;y_k,R_{cd_2}y_k,R_{cd_1}R_{cd_2}y_k \;   \text{are\;not\;colinear}.
	\end{cases} \label{CTdual}
\end{align}
The former \eqref{LTdual} is the $L_T$ method associated to the Douglas--Rachford operator $T_{\partial d_2, \partial d_1}$ as described in \eqref{LM} for the maximal monotone operators $\partial d_2$ and $\partial d_1$. The latter \eqref{CTdual} may be seen as a generalization of the circumcentered reflection method that uses reflected proximity operator substeps in place of reflected projections. Remember that this second method may \textit{not} be in $\mathbf{MSS}(V)$ for this more general optimization problem, \textit{even if} \ref{A1} holds; in fact, we will see its failure for the basis pursuit problem. 

Once the update $\mathbf{y}_{\Omega_T}$ is computed, its shadow---$\prox_{cd_2}\mathbf{y}_{\Omega_T} \in \R^m$---is a candidate for the updated multiplier $\lambda^+$. One may evaluate the objective function in order to decide whether to accept it or reject it in favor of a regular multiplier update. Naturally, in the case when the components are colinear, one would proceed with a regular update.

\subsection{Example: Basis Pursuit}

\begin{figure}
	\begin{center}
		\includegraphics[width=.85\textwidth]{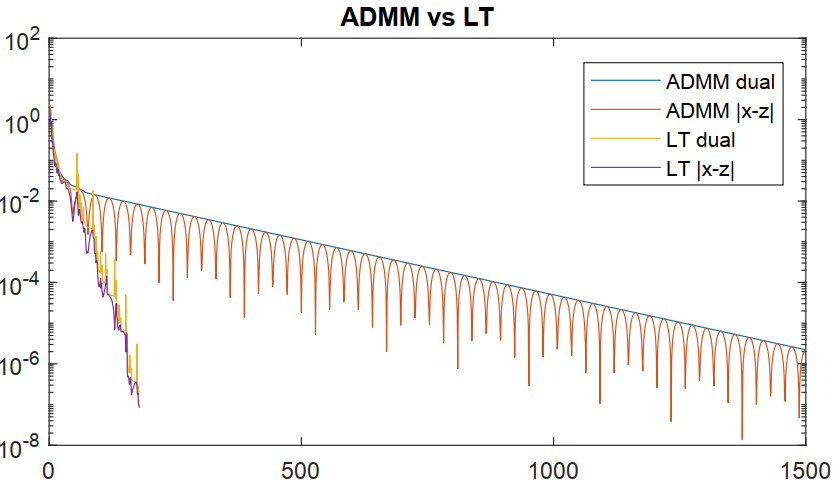}
	\end{center}
	\caption{$L_T$ (with objective function check) vs regular ADMM for the basis pursuit problem.}\label{fig:basis_pursuit}
\end{figure}

We will apply the algorithm to the \emph{basis pursuit} problem, for which the dual (DR) frequently \textit{shows signs of spiraling}, as in Figures~\ref{fig:basis_pursuit} and \ref{Basis_Pursuit_cindy}. Of course, a \textit{complete theoretical investigation} of this \textit{specific problem}---including the construction of the associated Lyapunov function for the dual---would likely constitute \textit{multiple} further articles. In the conclusion, we will suggest those projects as a natural further step of investigation; of course, such extensive and structure-specific work is beyond the scope and purpose of the present work. We include this experiment to demonstrate the \textit{implementation} and \textit{success} of our new algorithm for a primal/dual implementation, and not to make any absolute theoretical claims about convergence or rates. The basis pursuit problem,
\begin{equation*}
	\text{minimize}\quad \|x\|_1 \quad \text{subject\;to}\quad Ax=b,\quad x \in \R^n,\;A\in \R^{\nu \times n},\; b \in \R^\nu,\;\nu<n,
\end{equation*}
may be tackled by ADMM \eqref{ADMM} via the reformulation:
\begin{equation*}
	f:=\iota_{S},\;\;S:=\left \{x \in \R^n\;|\;Ax=b \right \}, \;\;M:= \Id,\;\;g:z \rightarrow \|z\|_1,\;\;E,Y:=\R^n.
\end{equation*}
The first update \eqref{ADMMx} is given by $x_{k+1}:=P_S(z_k-\lambda_k)$, and the second \eqref{ADMMz} by $z_{k+1}:={\rm Shrinkage}_{1/c}(x_{k+1}+\lambda_k)$. They may be computed efficiently; see the work of Boyd, Parikh, Chu, Peleato, and Eckstein \cite[Section~6.2]{boyd2011distributed}. We also have that
\begin{equation*}
	d_2 = \|\cdot\|_1^* = \iota_{\mathcal{B}_\infty}\quad \text{where}\; \mathcal{B}_\infty:=\{x\;|\; \|x\|_\infty \leq 1\},\quad \text{and\;so}\quad \prox_{cd2}=P_{\mathcal{B}_\infty},
\end{equation*}
is computable. After computing three updates of the dual (DR) sequence, $(y_k, y_{k+1}, y_{k+2})$, we update the DR sequence by using $L_T$ as in \eqref{LTdual}. Our multiplier update candidate is then $\lambda_{L_T} =P_{\mathcal{B}_\infty}\mathbf{y}_{L_T}$, and we propagate this update to the second variable \eqref{ADMMz} by $z_{L_T}=\mathbf{y}_{L_T}-\lambda_{L_T}$. We assess these against the regular update candidates by comparing their resultant objective function values,
\begin{equation*}
	\|x_{L_T}\|_1= \|P_S(z_{L_T}-\lambda_{L_T})\|_1 \quad \text{and}\quad \|x_{\text{REGULAR}}\|_1=\|P_S(z_{k+2}-\lambda_{k+2})\|_1,
\end{equation*}
and updating $\lambda_{k+2},x_{k+3}$ to match the winning candidate. 

\begin{figure}
	\begin{center}
		\includegraphics[width=.95\textwidth]{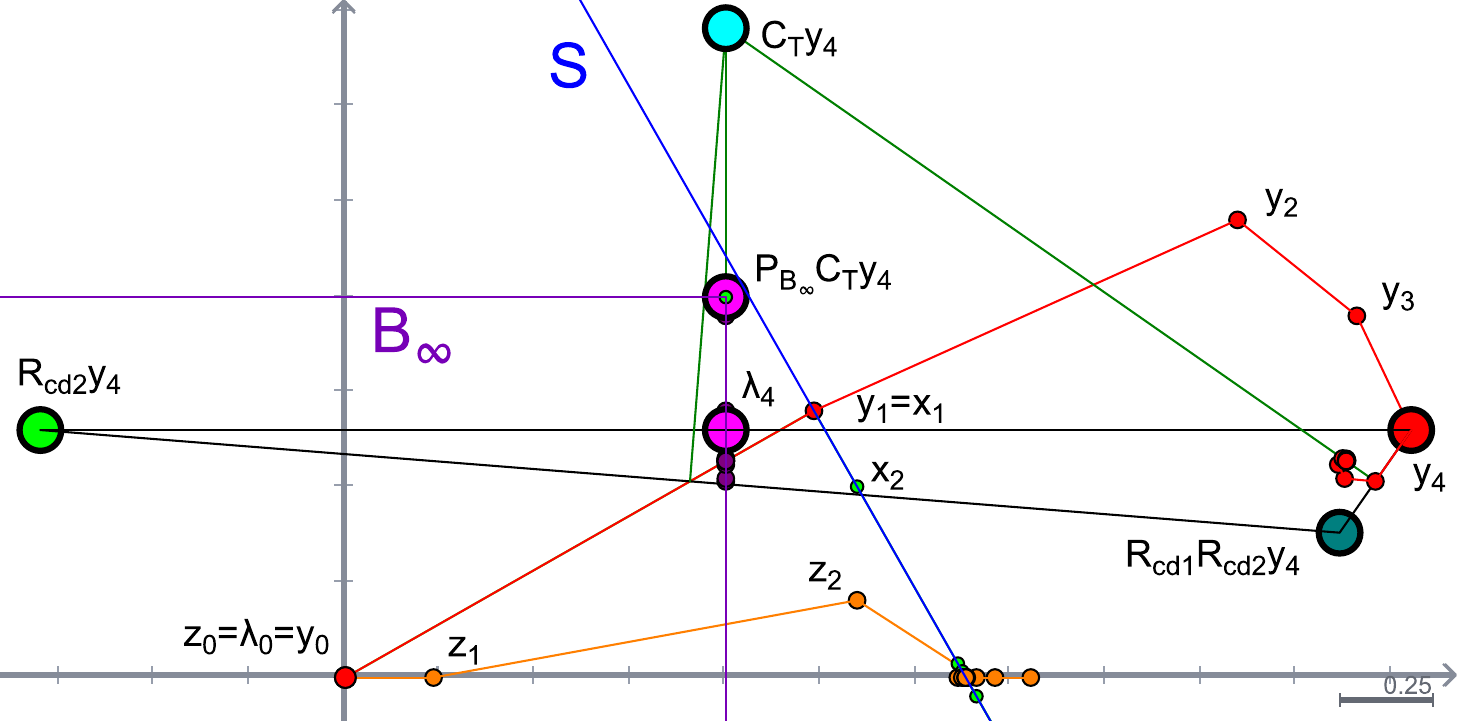}
	\end{center}
	\caption{The failure of $C_T$ for the basis pursuit problem.}\label{Basis_Pursuit_cindy}
\end{figure}

Figure~\ref{fig:basis_pursuit} shows the $L_T$ primal/dual approach together with vanilla ADMM for comparison. This juxtaposition of $L_T$ primal/dual method with vanilla ADMM resembles what has already been observed with CRM and Douglas--Rachford for nonconvex feasibility problems \cite{DHL2019}. The problem used was a randomly\footnote{Matlab code rand('seed', k); randn('seed', k); for k=1..1000} generated instance with $\text{seed}=0,n=30,\nu=10,c=1$, and the horizontal axis reports the number of passes through \eqref{ADMM}. This is the example problem from Boyd, Parikh, Chu, Peleato, and Eckstein's ADMM code, available at \cite{BoydADMMcode}. Our Matlab code is a modified version of theirs, and it is available at \cite{LindstromADMMcode}, together with the Cindrella scripts used to produce the other images in this paper. For 1,000 similar problems with $c=1$ and ``solve'' criterion 
\begin{align*}
	\|z_k-x_k\|<10^{-8}(\sqrt{n}+\max\{\|x_k\|,\|z_k\| \}) \quad \text{and}\quad \|z_k-z_{k-1}\| < 10^{-8}(\sqrt{n}+\|\lambda_k\|),
\end{align*}
$L_T$ and vanilla ADMM solved all problems and performed as in the following table. The column ``wins'' reports the number of problems (out of 1,000) for which a given method was faster than the other. Where an algorithm requires $n_k$ iterates to solve problem $k$, the statistics min, max, and quartiles Q1, Q2 (median), and Q3 describe the distribution of the set $\{n_1,\dots,n_{1,000}\}$.
\begin{equation*}
	\begin{tabular}{l| r r r r r r }
		& wins & min &  Q1 & Q2 & Q3 & max \\ \hline
		vanilla ADMM & 1 & 278 & 995 & 1582 & 2932 & 761,282 \\
		$L_T$ & 999 & 87 & 165 & 221 & 324 & 94,591
	\end{tabular}
\end{equation*}
The performance in Figure~\ref{fig:basis_pursuit} is typical of what we observed. Attempts to update the dual by using $C_T$ as in \eqref{CTdual} yielded an algorithm that consistently failed to solve the problem. Figure~\ref{Basis_Pursuit_cindy} shows why we would \emph{not expect} $C_T$ to work. For $y_k$ among the spiraling DR iterates $(y_k)_{k \geq 4}$ on the right, $H(y_k,R_{\mathcal{B}_\infty}y_k)$ is the vertical line containing the right side of the unit box. The CRM update $C_Ty_k$ will lie in this line, and so we would not expect it to be anywhere near the spiraling DR iterates, nor would we expect $P_{\mathcal{B}_\infty}{C_T}y_k$ to closely approximate the limit of the dual shadow (multiplier) sequence $(\lambda_k)_{k \in \N}$. In the figure, we have plotted and enlarged these points of the construction with $k=4$. In contradistinction, the operator $L_T$ depends only on the governing DR sequence, and so it is immune to this problem.

\section{Conclusion}\label{s:conclusion}

The ubiquity of Lyapunov functions (as recalled in Section~\ref{s:stability}), and the tendency of algorithms that show \textit{signs of spiraling} to satisfy \ref{A1} (as in \cite{Benoist,DT,giladi2019lyapunov}) suggest that future works should consider Lyapunov surrogate methods for other general optimization problems of form \eqref{objective} via the duality framework we introduced in Section~\ref{s:duality}. We have shown how such an extension is made broadly possible with generically computable operators selected from $\mathbf{MSS}(V)$. Moreover, we have already constructed one example, $L_T$, and experimentally demonstrated its success for a primal/dual adaptation. 

We should emphasize that $L_T$ is only one natural example from the much broader class of surrogate-motivated algorithms, and we have only focused on it to draw attention to the broader reasons for studying $\mathbf{MSS}(V)$. Naturally, one need not be limited to algorithms in $\mathbf{MCS}(V)$, which minimize spherical surrogates on affine subspaces of dimension $2$. One could easily work with spherical surrogates built from more iterates and higher dimensional affine subspaces. In fact, one could generalize even further, from minimizing \textit{spherical} surrogates to minimizing \textit{ellipsoidal} ones, or even \textit{general quadratic} ones. The specific example $L_T$ is only the beginning.

Convergence results for nonconvex problems are generally more challenging than for convex ones, and Lyapunov functions have already played an important role for the understanding of nonconvex DR. The Lyapunov function surrogate characterization of CRM in $E$ from Theorems~\ref{thm:gradientdescent} and \ref{thm:quadratic} illuminates the geometry of CRM well beyond the limited analysis provided in \cite{DHL2019}, and it provides an explicit bridge to state-of-the-art theoretical results for nonconvex DR. Future works may now use Lyapunov functions to study not only the broadly usable method $L_T$, but the feasibility method CRM as well.

Convergence guarantees for a class of problems as broad as those considered here do not exist for any algorithm. Even for the special case of nonconvex feasibility problems and the Douglas--Rachford operator, such results are few, require significant structure on the objective \cite{AB,Benoist,BLSSS,DT}, and are quite complicated to prove. For the more general nonconvex optimization problem, the broadest is the work of Li and Pong \cite{LP}. Even the ``specific'' $\mathbf{MSS}(V)$ candidate we demonstrated with, $L_T$, is defined for a general operator $T$, uses multiple steps of $T$ in its construction, and may take bolder steps than $T$ does. For all of these reasons, convergence guarantees that involve $L_T$---and other members of $\mathbf{MSS}(V)$---will almost certainly have to consider the specific structure not only of the objective, but of the chosen operator $T$ as well. Two perfect examples of such specific $T$ present themselves, and we recommend these as yet two more specific steps of investigation.

Firstly, we already have the finite convergence of $L_T$ when $T$ is the Douglas--Rachford operator for the feasibility problem of two lines in $\R^n$. This, together with the outstanding performance of $L_T$ for ADMM with the basis pursuit problem, suggests that this analysis should be extended for DR for more general affine and convex feasibility problems. 

Secondly, \cite[Appendix A]{boyd2011distributed} provides a proof of convergence for ADMM by means of a dilated quadratic Lyapunov function. An important future work is to study whether $L_T$ minimizes quadratic surrogates related to this \textit{specific} Lyapunov function, or whether a new, analogous algorithm based upon this Lyapunov function may be designed.

\subsubsection*{Acknowledgements}

The author was supported by Hong Kong Research Grants Council PolyU153085/16p and by the Alf van der Poorten Traveling Fellowship (Australian Mathematical Society). The author especially thanks Brailey Sims for his careful reading and many helpful comments.

\bibliographystyle{plain}
\bibliography{bibliography}

\end{document}